\newtheorem{theorem}{Theorem}[section]
\newtheorem{lemma}[theorem]{Lemma}
\newtheorem{proposition}[theorem]{Proposition}
\newtheorem{definition}[theorem]{Definition}
\newtheorem{problem}[theorem]{Problem}
\newtheorem{remark}[theorem]{Remark}
\begin{document}

\title[On the Sparing Number of Certain Graph Structures]{ON THE SPARING NUMBER OF CERTAIN GRAPH STRUCTURES}

\author{N K Sudev}
\address{Department of Mathematics, Vidya Academy of Science \& Technology, Thalakkottukara P O, Thrissur, Kerala, India.}
\email{sudevnk@gmail.com}

\author{K A Germina}
\address{Department of Mathematics, School of Mathematical \& Physical Sciences, Central University of Kerala, Kasaragod, Kerala, India.}
\email{srgerminaka@gmail.com}

%%%%%%%%%%%%%%%%%%%%%%%%%%%%%%%%%%%%%%%%%%%%%%%%%%%%%%%%%%%%%%%%%%%

\begin{abstract}
An integer additive set-indexer is defined as an injective function $f:V(G)\rightarrow 2^{\mathbb{N}_0}$ such that the induced function $g_f:E(G) \rightarrow 2^{\mathbb{N}_0}$ defined by $g_f (uv) = f(u)+ f(v)$ is also injective. An IASI $f$ is said to be a weak IASI if $|g_f(uv)|=max(|f(u)|,|f(v)|)$ for all $u,v\in V(G)$. A graph which admits a weak IASI may be called a weak IASI graph. The Sparing number of a graph $G$ is the minimum number of edges, with the singleton set-label, required for $G$ to admit a weak IASI.  In this paper, we study about the sparing number of certain graph structures and provide some results on them.

\vspace{2mm}

\noindent\textsc{2010 Mathematics Subject Classification.} O5C78.

\vspace{2mm}

\noindent\textsc{Keywords and phrases.} Weak integer additive set-indexers, mono-indexed elements of a graph, sparing number of a graph, conjoined graph, entwined graphs, floral graphs.

\end{abstract}

%\thanks{}

%%%%%%%%%%%%%%%%%%%%%%%%%%%%%%%%%%%%%%%%%%%%%%%%%%%%%%%%%%%%%%%%%%%

\maketitle

%%%%%%%%%%%%%%%%%%%%%%%%%%%%%%%%%%%%%%%%%%%%%%%%%%%%%%%%%%%%%%%%%%%

\section {Introduction}

%%%%%%%%%%%%%%%%%%%%%%%%%%%%%%%%%%%%%%%%%%%%%%%%%%%%%%%%%%%%%%%%%%%

For all  terms and definitions, not defined specifically in this paper, we refer to \cite{FH}. Unless mentioned otherwise, all graphs considered here are simple, finite and have no isolated vertices.

Let $\mathbb{N}_0$ denote the set of all non-negative integers. For all $A, B \subseteq \mathbb{N}_0$, the sum of these sets is denoted by  $A+B$ and is defined by $A + B = \{a+b: a \in A, b \in B\}$. The set $A+B$ is called the sumset of the sets $A$ and $B$. 

An {\em integer additive set-indexer} (IASI, in short) is defined in \cite{GA} as an injective function $f:V(G)\rightarrow 2^{\mathbb{N}_0}$ such that the induced function $g_f:E(G) \rightarrow 2^{\mathbb{N}_0}$ defined by $g_f (uv) = f(u)+ f(v)$ is also injective.

\cite{GS1} The cardinality of the labeling set of an element (vertex or edge) of a graph $G$ is called the {\em set-indexing number} of that element.

\begin{lemma}\label{L-Card}
\cite{GS1} Let $A$ and $B$ be two non-empty finite subsets of $\mathbb{N}_0$. Then $max(|A|,\\|B|) \le |A+B|\le |A|.|B|$. Therefore, for an integer additive set-indexer $f$ of a graph $G$, we have $max(|f(u)|, |f(v)|)\le |g_f(uv)|= |f(u)+f(v)| \le |f(u)| |f(v)|$, where $u,v\in V(G)$.
\end{lemma}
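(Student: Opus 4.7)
The plan is to prove the two inequalities of Lemma \ref{L-Card} separately using elementary counting, and then apply the result directly to the sumset $f(u)+f(v)$ attached to each edge by the IASI.

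For the upper bound $|A+B| \le |A|\cdot|B|$, I would observe that every element of $A+B$ is the image of some pair $(a,b) \in A \times B$ under the addition map $(a,b) \mapsto a+b$. Since this map from $A\times B$ onto $A+B$ is surjective, the cardinality of its image cannot exceed that of its domain, which is $|A|\cdot|B|$. Different pairs may well collapse to the same sum (for instance when $A$ or $B$ contains an arithmetic progression), so equality need not hold, but the inequality is immediate.

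For the lower bound $|A+B| \ge \max(|A|,|B|)$, without loss of generality assume $|A| \ge |B|$. Fix any element $b_0 \in B$ (possible since $B$ is non-empty) and consider the translate $A + \{b_0\} = \{a+b_0 : a \in A\}$. Translation by a fixed integer is an injection on $\mathbb{N}_0$, so $|A + \{b_0\}| = |A|$. Since $A + \{b_0\} \subseteq A+B$, we conclude $|A+B| \ge |A| = \max(|A|,|B|)$. The symmetric argument with a fixed $a_0 \in A$ handles the case $|B| \ge |A|$.

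Combining these two bounds yields the chain $\max(|A|,|B|) \le |A+B| \le |A|\cdot|B|$. The IASI statement is then a direct specialisation: for any edge $uv$ of $G$, the sets $f(u)$ and $f(v)$ are non-empty finite subsets of $\mathbb{N}_0$, and $g_f(uv) = f(u)+f(v)$, so substituting $A = f(u)$ and $B = f(v)$ gives exactly the required double inequality. I do not anticipate a serious obstacle here; the only point requiring mild care is to note that translation by a non-negative integer is injective on $\mathbb{N}_0$, which justifies $|A+\{b_0\}| = |A|$.
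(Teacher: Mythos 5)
Your argument is correct and complete: the upper bound follows from the surjection $(a,b)\mapsto a+b$ from $A\times B$ onto $A+B$, and the lower bound from the fact that the translate $A+\{b_0\}$ (respectively $B+\{a_0\}$) is an injective image of $A$ (respectively $B$) contained in $A+B$; specialising to $A=f(u)$, $B=f(v)$ gives the stated consequence for $g_f(uv)$. Note that the paper itself gives no proof of Lemma \ref{L-Card} --- it is quoted from the earlier work \cite{GS1} --- so there is no in-paper argument to compare against; your proof is the standard one and fills that gap correctly.
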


\begin{definition}{\rm
\cite{GS1} An IASI $f$ is called a {\em weak IASI} if $|g_f(uv)|=max(|f(u)|,\\|f(v)|)$ for all $u,v\in V(G)$. A graph which admits a weak IASI may be called a {\em weak IASI graph}. A weak  IASI $f$ is said to be {\em weakly $k$-uniform IASI} if $|g_f(uv)|=k$, for all $u,v\in V(G)$ and for some positive integer $k$.}
\end{definition}

\begin{definition}{\rm
\cite{GS3} An element (a vertex or an edge) of graph which has the set-indexing number 1 is called a {\em mono-indexed element} of that graph.}
\end{definition}

\begin{definition}{\rm
\cite{GS3} The {\em sparing number} of a graph $G$ is defined to be the minimum number of mono-indexed edges required for $G$ to admit a weak IASI and is denoted by $\varphi(G)$.}
\end{definition}

\begin{theorem}\label{T-WSG}
\cite{GS3} If a graph $G$ is a weak IASI graph, then any subgraph $H$ of $G$ is also a weak IASI graph or equivalently, If $G$ is a graph which has no weak IASI, then any supergraph of $G$ does not have a weak IASI.
\end{theorem}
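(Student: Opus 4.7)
The plan is to prove the forward direction directly by restriction, and then observe that the contrapositive gives the equivalent supergraph statement for free. So I would start by letting $f:V(G)\to 2^{\mathbb{N}_0}$ be a weak IASI of $G$, and letting $H$ be an arbitrary subgraph of $G$. Define $f_H := f|_{V(H)}$, the restriction of $f$ to the vertex set of $H$, and let $g_{f_H}$ be the induced edge function on $E(H)$, i.e.\ $g_{f_H}(uv) = f(u)+f(v)$ for $uv\in E(H)$.

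Next I would check, in order, the three conditions required for $f_H$ to be a weak IASI on $H$. First, $f_H$ is injective on $V(H)$ because $V(H)\subseteq V(G)$ and $f$ is injective on the larger set $V(G)$. Second, $g_{f_H}$ is injective on $E(H)$ because $E(H)\subseteq E(G)$ and $g_f$ is injective on $E(G)$; note that no new sumsets are introduced, since $g_{f_H}$ is literally the restriction of $g_f$ to $E(H)$. Third, for every edge $uv\in E(H)\subseteq E(G)$, the weak IASI equality
\[
|g_{f_H}(uv)| \;=\; |g_f(uv)| \;=\; \max(|f(u)|,|f(v)|) \;=\; \max(|f_H(u)|,|f_H(v)|)
\]
holds, because this equality was assumed for every edge of $G$ and every edge of $H$ is an edge of $G$. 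Hence $f_H$ is a weak IASI of $H$, so $H$ is a weak IASI graph.

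For the equivalent formulation, I would simply remark that if $G'$ is a supergraph of $G$ and $G'$ admitted a weak IASI, then by what was just proved, its subgraph $G$ would also admit a weak IASI, contradicting the hypothesis that $G$ has no weak IASI. Therefore no supergraph of $G$ can admit a weak IASI.

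There is no real obstacle in this proof; the whole argument is an inheritance-by-restriction verification, and the only thing to be careful about is to point out explicitly that injectivity of $f$ and $g_f$ on the larger sets $V(G)$ and $E(G)$ automatically yields injectivity of the restrictions on the smaller sets $V(H)$ and $E(H)$, so no additional structural assumption on $H$ (such as being induced, spanning, or connected) is needed.
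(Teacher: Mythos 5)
Your restriction argument is correct: the restriction of a weak IASI $f$ to $V(H)$ inherits injectivity on vertices and edges and satisfies the cardinality condition edge-by-edge, and the supergraph form follows by contraposition. The paper itself only cites this result from an earlier work without reproducing a proof, and your argument is exactly the standard verification one would expect there, so there is nothing further to compare.
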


\begin{theorem}\label{T-WBP}
\cite{GS3} All bipartite graphs admit a weak IASI. Hence, all paths, trees and even cycles admit a weak IASI. We also, observe that the sparing number of bipartite graphs is $0$.
\end{theorem}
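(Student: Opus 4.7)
The plan is to exhibit, for any bipartite graph $G$, an explicit weak IASI in which no edge is mono-indexed; this will simultaneously prove that $G$ admits a weak IASI and that $\varphi(G)=0$.

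Let $(V_1, V_2)$ be a bipartition of $G$, with $V_1 = \{v_1, \ldots, v_m\}$ and $V_2 = \{u_1, \ldots, u_n\}$. The first step is to label every vertex of $V_1$ with a distinct singleton set and every vertex of $V_2$ with a distinct set of cardinality at least $2$. A concrete choice is $f(v_i) = \{i\}$ and $f(u_j) = \{M_j, M_j+1, \ldots, M_j+j\}$ for a sufficiently rapidly growing sequence $M_1 < M_2 < \cdots$, so that the labels $f(u_j)$ are pairwise disjoint and $|f(u_j)| = j+1 \ge 2$ for every $j$.

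Next I would verify the three requirements. Injectivity of $f$ is immediate, since the singletons on $V_1$ are distinct integers and the sets on $V_2$ have pairwise distinct cardinalities $\ge 2$. For the weak IASI property, note that every edge $v_iu_j$ has a singleton endpoint, so by Lemma~\ref{L-Card} we get $|g_f(v_iu_j)| = |\{i\}+f(u_j)| = |f(u_j)| = \max(|f(v_i)|,|f(u_j)|)$. Injectivity of $g_f$ holds because $g_f(v_iu_j)$ is a translate of $f(u_j)$; two edges sharing the same $u_j$ give edge-labels differing by the translation amount $i$, while edges with different $u_j, u_l$ give edge-labels of different cardinalities $j+1 \ne l+1$, so they are distinct in every case.

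Finally, since $G$ is bipartite, every edge has exactly one endpoint in $V_2$, whose label has cardinality $\ge 2$; hence no edge can be mono-indexed and therefore $\varphi(G) = 0$. The consequences for paths, trees and even cycles follow at once since each of these classes is bipartite. The main subtlety is the simultaneous guarantee of injectivity of $g_f$ and the non-singleton property on $V_2$; using sets on $V_2$ of pairwise distinct cardinalities bypasses this cleanly, since cardinality is an invariant under the translations produced by the singleton labels on $V_1$.
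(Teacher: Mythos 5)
Your construction is correct and is essentially the standard argument behind this (cited) result: label one partite set with distinct singleton sets and the other with distinct non-singleton sets, so every edge label is a translate of a non-singleton vertex label, giving the weak condition, injectivity of $g_f$, and zero mono-indexed edges, hence $\varphi(G)=0$. The cardinality trick on $V_2$ is a clean way to secure injectivity of the induced edge labeling, but the overall route is the same as the one the paper relies on.
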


\begin{theorem}\label{T-SB1}
\cite{GS3} If a connected graph $G$ admits a weak IASI, then $G$ is bipartite or $G$ has at least one mono-indexed edge. 
\end{theorem}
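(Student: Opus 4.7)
My plan is to upgrade the sumset bound of Lemma \ref{L-Card} to the sharper estimate $|A+B|\ge |A|+|B|-1$, translate this into a constraint on which vertices can carry non-singleton labels, and then read off bipartiteness directly in the case when no mono-indexed edge exists.

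The first step is the sumset inequality. For nonempty finite sets $A,B\subseteq\mathbb{N}_0$, writing $A=\{a_1<\cdots<a_m\}$ and $B=\{b_1<\cdots<b_n\}$, the chain
\[
a_1+b_1<a_2+b_1<\cdots<a_m+b_1<a_m+b_2<\cdots<a_m+b_n
\]
exhibits $m+n-1$ distinct elements of $A+B$, so $|A+B|\ge |A|+|B|-1$. Consequently, whenever $|A|\ge 2$ and $|B|\ge 2$, one has $|A+B|\ge\max(|A|,|B|)+1>\max(|A|,|B|)$.

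Now let $f$ be a weak IASI on $G$. For any edge $uv$ the weak condition $|g_f(uv)|=\max(|f(u)|,|f(v)|)$ together with the inequality above forces at least one of $f(u),f(v)$ to be a singleton; otherwise we would have the strict inequality just derived. Partition the vertices into $V_1=\{v\in V(G):|f(v)|=1\}$ and $V_2=V(G)\setminus V_1$. What we just observed says that every edge of $G$ has at least one endpoint in $V_1$, i.e.\ $V_2$ is independent.

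Finally, an edge $uv$ is mono-indexed exactly when $|f(u)+f(v)|=1$; since both summands are nonempty, this happens if and only if $f(u)$ and $f(v)$ are both singletons, that is, $u,v\in V_1$. If $G$ has no mono-indexed edge, then $V_1$ contains no edge either, so $(V_1,V_2)$ is a bipartition of $G$ and $G$ is bipartite. The only delicate ingredient is the strengthened sumset bound; after that the argument is purely structural, and the connectedness hypothesis plays no essential role beyond ruling out trivial disjoint counterexamples.
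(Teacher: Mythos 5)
Your proof is correct. Note that the paper itself states this theorem only as a quoted result from \cite{GS3} and gives no proof here, so there is no internal argument to compare against; judged on its own, your argument is sound and is essentially the standard one. The chain $a_1+b_1<a_2+b_1<\cdots<a_m+b_1<a_m+b_2<\cdots<a_m+b_n$ correctly establishes $|A+B|\ge |A|+|B|-1$, which sharpens Lemma \ref{L-Card} just enough to conclude that under a weak IASI every edge must have at least one singleton-labelled endpoint, i.e.\ the set $V_2$ of vertices with non-singleton labels is independent; and since an edge is mono-indexed precisely when both endpoints are singletons, the absence of mono-indexed edges makes $(V_1,V_2)$ a bipartition. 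The proof of this result in the cited source is usually phrased through the contrapositive via odd cycles (a non-bipartite graph contains an odd cycle, and alternating singleton/non-singleton labels around an odd cycle forces two adjacent singleton vertices, hence a mono-indexed edge), whereas you construct the bipartition directly; the two routes rest on the same key sumset observation, and yours is arguably a little cleaner in that it does not need the odd-cycle characterization of bipartiteness. Your closing remark is also accurate: connectedness plays no essential role, since the componentwise argument gives the same dichotomy for arbitrary graphs.
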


\begin{theorem}\label{T-WKN}
\cite{GS3} The complete graph $K_n$ admits a weak IASI if and only if the number of edges of $K_n$ that have set-indexing number 1 is $\frac{1}{2}(n-1)(n-2)$.
\end{theorem}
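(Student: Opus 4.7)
The plan is to derive the theorem from a single structural fact: in any weak IASI, every edge must have at least one mono-indexed endpoint. Granting this, suppose $K_n$ admits a weak IASI $f$. The non-mono-indexed vertices then form an independent set of $K_n$, and since the maximum independent set in $K_n$ has size $1$, at most one vertex can carry a non-singleton label. Hence at least $n-1$ vertices are mono-indexed, and every edge joining two of them has label of cardinality $\max(1,1)=1$. This immediately yields at least $\tfrac{1}{2}(n-1)(n-2)$ mono-indexed edges, with equality precisely when exactly one vertex is non-mono-indexed.

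To establish the structural fact, I would invoke the additive inequality $|A+B|\ge |A|+|B|-1$, valid for any non-empty finite subsets $A,B\subseteq \mathbb{N}_0$. If an edge $uv$ had $|f(u)|\ge 2$ and $|f(v)|\ge 2$, this inequality would force $|f(u)+f(v)|\ge |f(u)|+|f(v)|-1 > \max(|f(u)|,|f(v)|)$, contradicting the defining condition $|g_f(uv)|=\max(|f(u)|,|f(v)|)$ of a weak IASI. Consequently one of $u,v$ must be mono-indexed, sharpening the lower bound of Lemma \ref{L-Card} in the relevant regime.

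For the converse, I must realise equality by exhibiting a weak IASI of $K_n$ with exactly $\tfrac{1}{2}(n-1)(n-2)$ mono-indexed edges. Fix a Sidon set $\{a_1,\ldots,a_{n-1}\}\subset \mathbb{N}_0$, for instance $a_i=2^{i-1}$, whose pairwise sums are all distinct by uniqueness of binary expansion. Put $f(v_i)=\{a_i\}$ for $1\le i\le n-1$ and $f(v_n)=\{0,1\}$. The weak IASI condition holds at every edge by direct inspection against Lemma \ref{L-Card}, and injectivity of $g_f$ follows from a short case-split on cardinality: singleton edge-labels $\{a_i+a_j\}$ are pairwise distinct by the Sidon property; the two-element edge-labels $\{a_i,a_i+1\}$ on the edges incident to $v_n$ are pairwise distinct for distinct $a_i$; and a singleton set can never coincide with a two-element set.

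The principal obstacle is the structural lemma, which rests on the additive inequality $|A+B|\ge|A|+|B|-1$ for finite sets of non-negative integers. This inequality is standard, but it is the one non-trivial ingredient; once it is in place, everything downstream is a counting argument together with the explicit Sidon-based construction above.
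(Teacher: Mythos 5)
Your proposal is correct; note that this paper does not prove Theorem \ref{T-WKN} at all but merely quotes it from \cite{GS3}, and your argument is essentially the standard one behind that result: the sumset bound $|A+B|\ge |A|+|B|-1$ forces at least one end vertex of every edge to be mono-indexed, so in $K_n$ at most one vertex can carry a non-singleton label, giving at least $\tfrac{1}{2}(n-1)(n-2)$ mono-indexed edges, and your labeling $f(v_i)=\{2^{i-1}\}$, $f(v_n)=\{0,1\}$ correctly attains the bound and settles the converse. The only caveat is one of reading, not of your proof: as literally stated the ``only if'' direction would fail for a weak IASI in which all $n$ vertices are mono-indexed (then $\binom{n}{2}$ edges are mono-indexed), so the theorem must be understood as asserting that at least $\tfrac{1}{2}(n-1)(n-2)$ mono-indexed edges are required and suffice, i.e.\ $\varphi(K_n)=\tfrac{1}{2}(n-1)(n-2)$, which is exactly what your ``at least, with equality when exactly one vertex is non-mono-indexed'' formulation proves.
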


\begin{theorem}\label{T-WUOC}
\cite{GS3} An odd cycle $C_n$ has a weak IASI if and only if it has at least one mono-indexed edge. That is, the sparing number of an odd cycle is $1$.
\end{theorem}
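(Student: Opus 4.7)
The plan is to establish the biconditional and the sparing-number value $\varphi(C_n)=1$ in one stroke, by proving a matching lower and upper bound of $1$ on the number of mono-indexed edges in any weak IASI of $C_n$.

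First I would dispatch the forward implication by invoking Theorem~\ref{T-SB1}: since $C_n$ with $n$ odd is connected but not bipartite, every weak IASI on $C_n$ must assign set-indexing number $1$ to at least one edge. This immediately gives $\varphi(C_n)\ge 1$ and also supplies the ``only if'' direction of the stated equivalence.

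For the reverse implication and the matching upper bound, I would construct an explicit weak IASI on $C_n=v_1v_2\cdots v_nv_1$ with exactly one mono-indexed edge. The idea is to alternate singleton labels and $2$-element labels around the cycle: take $f(v_{2i-1})=\{10i\}$ for $i=1,\ldots,(n+1)/2$ and $f(v_{2j})=\{10j+1,\,10j+2\}$ for $j=1,\ldots,(n-1)/2$. Because $n$ is odd, both $v_1$ and $v_n$ receive singletons, so the wrap-around edge $v_nv_1$ has edge-label cardinality $1$, while every other edge joins a singleton to a pair and therefore, by Lemma~\ref{L-Card}, has edge-label cardinality exactly $\max(1,2)=2$.

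The only nontrivial step is checking that $f$ and $g_f$ are injective. Spacing ingredients by multiples of $10$ places the singleton vertex-labels in $\{10,20,30,\ldots\}$ and the pair vertex-labels in disjoint windows $\{10j+1,10j+2\}$, and the sumsets on non-wrap edges land in cleanly separated blocks determined by the indices of their endpoints, so a routine bookkeeping check rules out coincidences. This produces a weak IASI on $C_n$ with exactly one mono-indexed edge, giving $\varphi(C_n)\le 1$; combined with the lower bound this yields $\varphi(C_n)=1$ and simultaneously completes the ``if'' direction. I do not foresee any conceptual obstacle beyond the injectivity bookkeeping just described.
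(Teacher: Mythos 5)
Your proof is correct, but note that the paper you are being compared against does not actually prove this statement: Theorem~\ref{T-WUOC} is quoted from \cite{GS3} as a preliminary result, so there is no in-paper argument to match. On its own terms your two-sided argument is sound and self-contained. The lower bound is exactly the right use of Theorem~\ref{T-SB1}: an odd cycle is connected and non-bipartite, so any weak IASI forces at least one mono-indexed edge (and if you wanted to avoid leaning on \cite{GS3} entirely, the same bound follows directly from the observation that in a weak IASI every edge must have at least one singleton-labelled endpoint, so the non-mono-indexed vertices form an independent set; if additionally no edge were mono-indexed the mono-indexed vertices would be independent too, giving a bipartition of $C_n$, a contradiction). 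Your upper-bound construction also checks out: with $f(v_{2i-1})=\{10i\}$ and $f(v_{2j})=\{10j+1,10j+2\}$, the non-wrap edges get labels $\{20i+1,20i+2\}$ and $\{20i+11,20i+12\}$, which lie in distinct residue classes modulo $20$ and are pairwise distinct, the wrap edge $v_nv_1$ gets a singleton label, every edge joins a singleton to a $2$-set (or two singletons), so $|g_f(uv)|=\max(|f(u)|,|f(v)|)$ throughout, and both $f$ and $g_f$ are injective. This yields a weak IASI with exactly one mono-indexed edge, hence $\varphi(C_n)=1$ and both directions of the equivalence. The only stylistic gap is that you waved at the injectivity bookkeeping rather than recording the explicit edge-label computation above, but the claim is true as stated.
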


\begin{theorem}\label{T-NME}
\cite{GS3} Let $C_n$ be a cycle of length $n$ which admits a weak IASI, for a positive integer $n$. Then, $C_n$ has an odd number of mono-indexed edges when it is an odd cycle and has even number of mono-indexed edges, when it is an even cycle. 
\end{theorem}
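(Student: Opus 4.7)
The plan is to exploit the structural constraint that a weak IASI places on adjacent vertices, reduce the problem to counting in a cycle with an independent set of ``heavy'' vertices, and then observe that the count has the right parity.

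First I would establish the key structural fact: in any weak IASI, every edge $uv$ has at least one mono-indexed endpoint. This follows from Lemma~\ref{L-Card} together with the elementary inequality $|A+B|\ge |A|+|B|-1$ for finite subsets of $\mathbb{N}_0$ (with equality only for arithmetic progressions of equal common difference). Indeed, if both $|f(u)|\ge 2$ and $|f(v)|\ge 2$, then $|g_f(uv)|=|f(u)+f(v)|\ge |f(u)|+|f(v)|-1 > \max(|f(u)|,|f(v)|)$, contradicting the weak IASI condition. So the non-mono-indexed vertices form an independent set, and an edge of $G$ is mono-indexed if and only if both its endpoints are mono-indexed.

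Next I would apply this to $C_n=v_1v_2\cdots v_n v_1$. Let $m$ denote the number of non-mono-indexed vertices on the cycle. If $m=0$, all $n$ edges are mono-indexed, and we are done trivially since $n$ and $n$ have the same parity. Suppose $m\ge 1$. The $m$ non-mono-indexed vertices, being pairwise non-adjacent, split $C_n$ into $m$ arcs; write $\ell_i$ for the number of edges in the $i$-th arc, so that $\sum_{i=1}^m \ell_i = n$. Since consecutive non-mono-indexed vertices would violate the independence property, each $\ell_i\ge 2$. Among the $\ell_i$ edges of the $i$-th arc, exactly two have a non-mono-indexed endpoint (the first and last edges), so exactly $\ell_i-2$ are mono-indexed.

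Summing, the total number of mono-indexed edges of $C_n$ is
\[
\sum_{i=1}^m (\ell_i-2) \;=\; n-2m,
\]
which has the same parity as $n$. Hence when $n$ is odd the count is odd, and when $n$ is even the count is even, as required. The main obstacle I foresee is simply to present the opening reduction cleanly, i.e.\ the claim that a weak IASI forces at least one singleton endpoint on every edge; once that is in place the argument is a routine parity count around the cycle.
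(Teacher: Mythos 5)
Your argument is correct, and it is worth noting that the paper itself does not prove this statement: Theorem~\ref{T-NME} is only quoted from \cite{GS3}, so your proposal is a self-contained proof rather than a reproduction of one given here. The decisive step you isolate --- that under a weak IASI every edge must have at least one mono-indexed endpoint, so the vertices with non-singleton labels form an independent set and an edge is mono-indexed exactly when both its endpoints are --- is precisely the structural lemma that underlies the cited results (compare Theorems~\ref{T-WUOC} and \ref{T-WKN}); your justification via $|A+B|\ge |A|+|B|-1$ is sound, though since Lemma~\ref{L-Card} only records $\max(|A|,|B|)\le |A+B|\le |A|\,|B|$, you should state the one-line proof of this extra inequality (order $A=\{a_1<\dots<a_p\}$, $B=\{b_1<\dots<b_q\}$ and observe $a_1+b_1<\dots<a_p+b_1<a_p+b_2<\dots<a_p+b_q$) rather than treat it as given. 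The arc-counting conclusion, that with $m\ge 1$ non-mono-indexed vertices the mono-indexed edges number $\sum_i(\ell_i-2)=n-2m$, and the degenerate cases $m=0$ (all $n$ edges mono-indexed) and $m=1$ (one arc of length $n$), are all handled correctly, and the resulting statement ``the number of mono-indexed edges is congruent to $n$ modulo $2$'' is exactly what the theorem asserts, indeed in a slightly sharper form since it also yields the lower bound of one mono-indexed edge for odd cycles. In short: correct, complete modulo the elementary sumset inequality, and arguably cleaner than the alternating-labelling arguments sketched elsewhere in the paper.
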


\begin{theorem}\label{T-SNUG}
\cite{GS4} Let $G_1$ and $G_2$ be two weak IASI graphs. Then, $\varphi(G_1\cup G_2) = \varphi(G_1)+\varphi(G_2)-\varphi(G_1\cap G_2)$.
\end{theorem}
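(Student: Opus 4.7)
The plan is to prove the equality by showing both inequalities via inclusion--exclusion on the mono-indexed edges. As a preliminary, $G_1\cap G_2$ is a subgraph of $G_1$, so by Theorem~\ref{T-WSG} it admits a weak IASI and $\varphi(G_1\cap G_2)$ is well defined. Writing $m_f(H)$ for the number of edges of a subgraph $H$ that are mono-indexed under a weak IASI $f$ on $G_1\cup G_2$, the edge decomposition $E(G_1\cup G_2)=E(G_1)\cup E(G_2)$ together with $E(G_1)\cap E(G_2)=E(G_1\cap G_2)$ gives the pointwise identity
\[
m_f(G_1\cup G_2)=m_f(G_1)+m_f(G_2)-m_f(G_1\cap G_2).
\]

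For the upper bound, I would first fix an optimal weak IASI $h$ on $G_1\cap G_2$ with $\varphi(G_1\cap G_2)$ mono-indexed edges, and then extend $h$ to optimal weak IASIs $f_1$ on $G_1$ and $f_2$ on $G_2$, drawing the labels outside the common part from disjoint subsets of $\mathbb{N}_0$ so that no new mono-indexed edges are forced inside $G_1\cap G_2$. Since $f_1$ and $f_2$ agree on $V(G_1\cap G_2)$ they combine into a weak IASI $f$ on $G_1\cup G_2$ with $m_f(G_i)=\varphi(G_i)$ and $m_f(G_1\cap G_2)=\varphi(G_1\cap G_2)$; substituting into the identity above gives $\varphi(G_1\cup G_2)\le \varphi(G_1)+\varphi(G_2)-\varphi(G_1\cap G_2)$.

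For the reverse direction, let $f$ be an optimal weak IASI on $G_1\cup G_2$. Its restriction to each of $G_1$, $G_2$, and $G_1\cap G_2$ is a weak IASI on the respective subgraph by Theorem~\ref{T-WSG}, so $m_f(G_i)\ge\varphi(G_i)$ for $i=1,2$, and an exchange argument that replaces the labeling on the intersection by an optimum and then re-extends outward lets us assume $m_f(G_1\cap G_2)=\varphi(G_1\cap G_2)$; substituting into the identity then yields the matching lower bound $\varphi(G_1\cup G_2)\ge \varphi(G_1)+\varphi(G_2)-\varphi(G_1\cap G_2)$.

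The main obstacle in both directions is the compatibility of optimal labelings, namely the claim that optimal weak IASIs on $G_1$ and on $G_2$ can be chosen so that their restrictions to $G_1\cap G_2$ coincide and simultaneously realize $\varphi(G_1\cap G_2)$. The unbounded supply of labels in $\mathbb{N}_0$ gives the relabeling flexibility needed to decouple the two sides, but verifying that the combined function remains an injective IASI satisfying the weak condition edge-by-edge, with precisely the expected count of mono-indexed edges, is the technically delicate core of the argument.
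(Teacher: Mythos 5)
The paper does not actually prove this statement; it is imported from \cite{GS4} and used as a tool, so your argument can only be measured against the statement itself and the way the paper applies it. Your labeling-level identity $m_f(G_1\cup G_2)=m_f(G_1)+m_f(G_2)-m_f(G_1\cap G_2)$ is correct, but the rest has a genuine gap: minimization does not pass through this identity, and the two steps you yourself flag as delicate --- extending an optimal labeling of $G_1\cap G_2$ to simultaneously optimal labelings of $G_1$ and $G_2$, and the ``exchange argument'' that forces $m_f(G_1\cap G_2)=\varphi(G_1\cap G_2)$ for an optimal $f$ on the union --- are precisely where the proof must fail. Note in particular that in your lower-bound direction the intersection term carries a negative sign, so restriction (Theorem \ref{T-WSG}) only gives $m_f(G_1\cap G_2)\ge\varphi(G_1\cap G_2)$, which is the wrong inequality; you would need an upper bound there, and no exchange can supply it in general.

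Concretely, let $G_1$ and $G_2$ be two triangles sharing one edge $uv$. By Theorem \ref{T-WUOC}, $\varphi(G_1)=\varphi(G_2)=1$; the intersection is a single edge, which is bipartite, so $\varphi(G_1\cap G_2)=0$ by Theorem \ref{T-WBP}; hence the right-hand side of the formula is $2$. But $\varphi(G_1\cup G_2)=1$: label $u$ and $v$ by distinct singleton sets and the two remaining vertices by distinct non-singleton sets, so that the only mono-indexed edge is $uv$ (this is exactly the situation of Case 3 of Theorem \ref{T-SNUC}, where the paper records the value $1$). Moreover, since $|A+B|\ge |A|+|B|-1$ for finite sets of integers, a weak IASI forces every edge to have an end with a singleton label, and one checks that any labeling of $G_1\cup G_2$ with $m_f(G_1\cap G_2)=0$ (i.e., with $uv$ not mono-indexed) must create at least two mono-indexed edges. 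So under the literal reading of $\varphi(G_1\cap G_2)$ as the sparing number of the intersection graph, the equality you are trying to prove is false, and your compatibility/exchange step cannot be completed; the formula is tenable only if the intersection term is reinterpreted as the number of mono-indexed edges of $G_1\cap G_2$ under the common optimal labeling of the union, which is in effect how the paper uses it (it writes $\varphi(C_m\cap C_n)=1$ in Case 3 of Theorem \ref{T-SNUC} even though the intersection there is a path). Your write-up keeps the literal reading and defers exactly the claim that breaks, so as it stands it does not constitute a proof.
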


%%%%%%%%%%%%%%%%%%%%%%%%%%%%%%%%%%%%%%%%%%%%%%%%%%%%%%%%%%%%%%%%%%%

\section {Sparing Number of Graph Union}

%%%%%%%%%%%%%%%%%%%%%%%%%%%%%%%%%%%%%%%%%%%%%%%%%%%%%%%%%%%%%%%%%%%

In this paper, we discuss about the sparing number of different graph operations. All sets under consideration in this paper are subsets of $\mathbb{N}_0$.

\begin{proposition}\label{T-SNUP}
Let $P_m$ and $P_n$ be two paths. Then, the sparing number of $P_m\cup P_n$ is either $0$ or $1$. 
\end{proposition}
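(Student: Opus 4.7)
The plan is to apply Theorem~\ref{T-SNUG} together with the bipartite structure of paths. Each $P_i$ is bipartite, so Theorem~\ref{T-WBP} gives $\varphi(P_m)=\varphi(P_n)=0$, and the intersection $P_m\cap P_n$, being a subgraph of a path, is a disjoint union of subpaths, hence also bipartite with $\varphi(P_m\cap P_n)=0$.

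If the canonical $2$-colorings of $P_m$ and $P_n$ can be aligned on the common vertices, then $P_m\cup P_n$ inherits a proper $2$-coloring and is itself bipartite, and Theorem~\ref{T-WBP} yields $\varphi(P_m\cup P_n)=0$. The disjoint case $V(P_m)\cap V(P_n)=\emptyset$ is a special instance, and the conclusion is also consistent with Theorem~\ref{T-SNUG} since all three terms on the right hand side vanish.

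If no compatible $2$-coloring exists, $P_m\cup P_n$ must contain an odd cycle, and Theorem~\ref{T-SB1} gives $\varphi(P_m\cup P_n)\ge 1$. For the matching upper bound I would produce a weak IASI with exactly one mono-indexed edge: find an edge $e$ of $P_m\cup P_n$ whose deletion restores bipartiteness, take a proper $2$-coloring $(A,B)$ of $(P_m\cup P_n)-e$ with both endpoints of $e$ lying in the same class $A$, and then assign distinct singletons to the vertices of $A$ and distinct non-singleton sets to the vertices of $B$, chosen so that all induced edge sums remain pairwise distinct. By Lemma~\ref{L-Card}, every edge of $(P_m\cup P_n)-e$ then satisfies $|g_f(uv)|=\max(|f(u)|,|f(v)|)$ automatically, while $e$ is the unique mono-indexed edge.

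The main obstacle is the structural claim that such an edge $e$ always exists---equivalently, that all odd cycles of $P_m\cup P_n$ share a common edge. This is where the linearity of the two paths has to be exploited, namely the way their shared vertices interleave along each path; once this is established, the rest of the argument reduces to the explicit construction above.
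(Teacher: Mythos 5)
Your argument has a genuine gap, and it is exactly the one you flag yourself: the ``structural claim'' that some single edge $e$ of $P_m\cup P_n$ meets every odd cycle (equivalently, that deleting one edge restores bipartiteness) is never proved, and in the generality in which you have set up the problem it is in fact \emph{false}. Take $P_m = v_1v_2v_3v_4v_5$ and let $P_n$ be the path $v_1v_3v_5$ (edges $v_1v_3$ and $v_3v_5$). Then $P_m\cup P_n$ contains the two edge-disjoint triangles $v_1v_2v_3$ and $v_3v_4v_5$; by Theorem~\ref{T-WUOC} each needs its own mono-indexed edge, so $\varphi(P_m\cup P_n)\ge 2$. So no choice of a single edge $e$ can work, and the statement itself fails if the two paths are allowed to intersect arbitrarily. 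The proposition is only true under the implicit hypothesis visible in the paper's own proof, namely that the paths meet only in end vertices, so that $P_m\cup P_n$ is either a path (one common end vertex) or a single cycle (both end vertices in common); your proposal never restricts to, nor isolates, this situation, which is precisely ``where the linearity of the two paths has to be exploited'' and which you leave as the open obstacle.

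By contrast, the paper's argument is a short case analysis on how the end vertices are identified: if the union is a path it is bipartite and Theorem~\ref{T-WBP} gives sparing number $0$; if the union is a cycle, it is even when the path lengths have the same parity (sparing number $0$) and odd otherwise (sparing number $1$ by Theorem~\ref{T-NME}). Your reduction of the upper bound to Lemma~\ref{L-Card} via a two-coloring with singletons on one class and non-singletons on the other is a sound construction technique and would indeed finish the odd-cycle case, but without the missing structural restriction the overall proof does not go through.
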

\begin{proof}
{\bf Case-1:} If $P_m$ and $P_n$ have exactly one common end vertex, then $P_m\cup P_n$ is also a path. Hence, by Theorem \ref{T-WBP}, the sparing number of  $P_m\cup P_n$ is $0$.

Let both the end vertices of $P_m$ and $P_n$ are the same. Then $P_m\cup P_n$ is a cycle. Therefore, we have the following situations.

\noindent {\bf Case-2:} Let $P_m$ and $P_n$ are of same parity. Then, $P_m\cup P_n$ is an even cycle. Hence, by Theorem \ref{T-WBP}, the sparing number of $P_m\cup P_n$ is $0$.

\noindent {\bf Case-3:} Let $P_m$ and $P_n$ are of different parity. Then, $P_m\cup P_n$ is an odd cycle. Hence, by Theorem \ref{T-NME}, the sparing number of $P_m\cup P_n$ is $1$.
\end{proof}

\begin{lemma}\label{T-SNUCP}
Let $C_n$ be a cycle that has a weak IASI and $P_m$ be a path. Then, the sparing number of $C_n\cup P_m$ is $0$ or $1$.
\end{lemma}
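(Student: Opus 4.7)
The plan is to apply the union formula from Theorem \ref{T-SNUG}, namely
\[
\varphi(C_n\cup P_m)\;=\;\varphi(C_n)+\varphi(P_m)-\varphi(C_n\cap P_m),
\]
and then bound each of the three terms on the right-hand side separately.

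Two of these terms are disposed of immediately. Since $P_m$ is bipartite, Theorem \ref{T-WBP} gives $\varphi(P_m)=0$. Combining Theorems \ref{T-WBP} and \ref{T-WUOC}, we know that $\varphi(C_n)=0$ when $n$ is even and $\varphi(C_n)=1$ when $n$ is odd; by hypothesis $C_n$ admits a weak IASI, so in either case $\varphi(C_n)\in\{0,1\}$.

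The main step is to establish that $\varphi(C_n\cap P_m)=0$. The observation to exploit is that the intersection is a subgraph of $P_m$ and therefore cannot contain any cycle; hence $C_n\cap P_m$ is a forest, in fact a disjoint union of sub-paths common to $P_m$ and $C_n$. Since every forest is bipartite, Theorem \ref{T-WBP} yields $\varphi(C_n\cap P_m)=0$. If $C_n$ and $P_m$ happen to be vertex-disjoint, the intersection is empty and the union formula reduces to $\varphi(C_n)+\varphi(P_m)$, which leads to the same bound.

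Substituting the three values back into the union formula produces $\varphi(C_n\cup P_m)=\varphi(C_n)\in\{0,1\}$, which is precisely the assertion. I expect no real obstacle beyond the brief structural remark that $C_n\cap P_m$ must be acyclic; this is immediate because any cycle in the intersection would in particular be a cycle inside the path $P_m$, which is impossible.
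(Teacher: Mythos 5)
Your argument hinges on applying Theorem \ref{T-SNUG} as an unconditional identity, and that is exactly where it breaks down. The sparing number of $C_n\cup P_m$ is not determined by $\varphi(C_n)$, $\varphi(P_m)$ and $\varphi(C_n\cap P_m)$ alone: when the two end vertices of $P_m$ are distinct vertices of $C_n$, the union is a theta graph containing two further cycles, $C'\cup P_m$ and $C''\cup P_m$, where $C'$ and $C''$ are the two arcs of $C_n$ between the attachment points; these cycles use edges of both constituents and impose their own parity constraints. Concretely, take $C_4=v_1v_2v_3v_4v_1$ and let $P_m$ be the path $v_1wv_2$ of length $2$. Then $\varphi(C_4)=\varphi(P_m)=0$ and $C_4\cap P_m$ consists of the two vertices $v_1,v_2$ with no edges, so your computation returns $0$; but $C_4\cup P_m$ contains the odd cycle $v_1wv_2v_1$, so by Theorem \ref{T-SB1} it needs at least one mono-indexed edge, and indeed its sparing number is $1$. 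This is precisely the second subcase of Case 2 (and similarly of Case 4) in the paper's proof, where the value is shown to be $1$, not $0$. The paper invokes Theorem \ref{T-SNUG} only in Case 1, where $C_n$ and $P_m$ meet in a single vertex and the union creates no new cycles; already Proposition \ref{T-SNUP}, Case 3, shows that a union of two sparing-number-zero paths can have sparing number $1$, so the formula cannot be used as a black box when new odd cycles appear.

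Consequently the identity you derive, $\varphi(C_n\cup P_m)=\varphi(C_n)$, is false in general, even though the weaker assertion of the lemma (the value is $0$ or $1$) is true. What is missing is the analysis the paper carries out: split $C_n$ at the attachment points into $C'$ and $C''$, compare the parities of $C'$, $C''$ and $P_m$, and observe that whenever odd cycles arise in the theta graph, a single mono-indexed edge placed on $P_m$ (or on the appropriate arc of $C_n$) lies on every odd cycle simultaneously, giving the upper bound $1$, while Theorem \ref{T-SB1} gives the matching lower bound in the non-bipartite cases. Your observations that $\varphi(P_m)=0$ and that $C_n\cap P_m$ is acyclic are correct, but they do not substitute for this case analysis.
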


\begin{proof}
{\bf Case-1:} Let $C_n$ and $P_m$ have one point in common. If $C_n$ is an even cycle, then $\varphi(C_n)=0$ and $\varphi(P_m)=0$. Hence, by Theorem \ref{T-SNUG}, $\varphi(C_n\cup P_m)=0$.

Let both the end points of $P_m$ lie on the cycle $C_n$. Let $u$ and $v$ be the end vertices of $P$ which lie on the cycle $C_n$. Let $C'$ and $C''$ the two sections of the cycle $C_m$ which have the common end points $u$ and $v$.

\noindent {\bf Case-2:} Let $C_n$ and $P_m$  are of even length. Clearly, $C'$ and $C''$ are simultaneously odd or simultaneously even. 

First, assume that $C'$ and $C''$ be of even length. Then, the cycles $C'\cup P_m$ and $C''\cup P_m$ are even cycles and have the sparing number $0$. Therefore, $\varphi(C_n\cup P_m)=0$. 

Next, assume that $C'$ and $C''$ be of odd length. Then, the cycles $C'\cup P_m$ and $C''\cup P_m$ are odd cycles and have the sparing number $1$. Since $C_n$ is even, the sections $C'$ and $C''$ need not have any mono-indexed edges. Hence, $P$ has the required mono-indexed edge.  Therefore, $\varphi(C\cup P)=1$.

\noindent {\bf Case-3:} Let $C_n$ be an odd cycle and $P_m$ is a path of even length. Then, either $C'$ or $C''$ is odd and the other is even. Without loss of generality, $C'$ is odd and $C''$ is even. Then, $C'\cup P_m$ is an odd cycle and $C''\cup P_m$ is an even cycle. Therefore, the sparing number of $C'\cup P_m$ is $1$ and that of $C''\cup P_m$ is $0$. Therefore, the section $C'$ has a mono-indexed edge. Hence, $\varphi(C_m\cup P_n)=1$

\noindent {\bf Case-4:} Let $C_n$ is an even cycle and $P_m$  is a path of odd length. 

First, assume that $C'$ and $C''$ be of odd length. Then, the cycles $C'\cup P_m$ and $C''\cup P_m$ are even cycles and have the sparing number $0$. Therefore, $\varphi (C_n\cup P_m)=0$. 

Next, assume that $C'$ and $C''$ be of even length. Then, the cycles $C'\cup P_m$ and $C''\cup P_m$ are odd cycles and have the sparing number $1$. Since $C_n$ is even, the sections $C'$ and $C''$ need not have any mono-indexed edges. Hence, $P$ has the required mono-indexed edge.  Therefore, $\varphi(C_n\cup P_m)=1$. 

\noindent {\bf Case-5:} Let $C_n$ and $P_m$  are of odd length. Then, either $C'$ or $C''$ is odd and the other is even. Without loss of generality, $C'$ is odd and $C''$ is even. Then, $C'\cup P_m$ is even and $C''\cup P_m$ is odd. Then, there is no  number of mono-indexed edges in $C'\cup P_m$ and the number of mono-indexed edge in $C''\cup P_m$ is $1$. Therefore, the section $C''$ has one mono-indexed edge. Hence, $\varphi(C_n\cup P_n)=1$.
\end{proof}

\begin{theorem}\label{T-SNUC}
Let $C_m$ and $C_n$ be two cycles which admit a weak IASI. Then, the sparing number of $C_m\cup C_n$ is either $0$, $1$ or $2$. 
\end{theorem}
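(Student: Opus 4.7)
The plan is to apply Theorem~\ref{T-SNUG}, which gives
\[
\varphi(C_m\cup C_n) \;=\; \varphi(C_m)+\varphi(C_n)-\varphi(C_m\cap C_n).
\]
By Theorems~\ref{T-WBP} and~\ref{T-WUOC}, each cycle individually contributes $0$ if it is even and $1$ if it is odd, so $\varphi(C_m)+\varphi(C_n)\in\{0,1,2\}$. The problem thus reduces to controlling the sparing number of the intersection.

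To handle the intersection, I would first dispose of the trivial possibility $C_m=C_n$, in which $C_m\cup C_n$ is simply $C_m$ and the sparing number is already $0$ or $1$. Otherwise the cycles are distinct, so $C_m\cap C_n$ is a proper subgraph of at least one of them. Every proper subgraph of a cycle is a disjoint union of paths together with some isolated vertices, because deleting even a single edge from a cycle produces a path; in particular $C_m\cap C_n$ is a forest, hence bipartite. By Theorem~\ref{T-WBP}, $\varphi(C_m\cap C_n)=0$, and substituting yields $\varphi(C_m\cup C_n)=\varphi(C_m)+\varphi(C_n)\in\{0,1,2\}$. Each value is realized by a concrete example: two even cycles yield $0$, an even cycle together with an odd cycle yields $1$, and two odd cycles yield $2$.

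The step that deserves the most care is verifying that the intersection really is a forest even when the two cycles meet in a complicated pattern, such as along several disjoint arcs or at many isolated vertices. The cleanest way to see this, and the one I would adopt, is to phrase the observation as \emph{$C_m\cap C_n$ is a proper subgraph of each cycle}, from which the forest structure is forced since any subgraph of $C_m$ missing at least one edge lies inside a path. Beyond this observation the proof is essentially a one-line application of Theorem~\ref{T-SNUG}, in sharp contrast with the lengthier case analysis used for Lemma~\ref{T-SNUCP}.
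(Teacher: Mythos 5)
Your reduction to Theorem~\ref{T-SNUG} hides a real problem in the case where $C_m$ and $C_n$ are odd cycles that share edges. Your observation that $C_m\cap C_n$ is then a disjoint union of paths (hence bipartite, with sparing number $0$) is fine, and with a literal reading of Theorem~\ref{T-SNUG} it gives $\varphi(C_m\cup C_n)=\varphi(C_m)+\varphi(C_n)=2$ whenever both cycles are odd. But that exact value is wrong when the cycles overlap: take two triangles sharing an edge $uv$, i.e.\ $K_4$ minus an edge, with $a,b$ the two vertices of degree $2$. Label $a$ and $b$ by distinct non-singleton sets and $u,v$ by distinct singleton sets; then $uv$ is the only mono-indexed edge, so the sparing number is $1$ (it cannot be $0$, since the graph is connected and non-bipartite, by Theorem~\ref{T-SB1}). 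This is exactly the paper's Case~3: when two odd cycles have a common path, a single mono-indexed edge placed on that path serves both cycles, and the paper's bookkeeping in Theorem~\ref{T-SNUG} takes $\varphi(C_m\cap C_n)=1$ there --- that is, it counts the mono-indexed edges of the intersection under the common labeling, not the sparing number of the intersection as a standalone bipartite graph, which is what you used.

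So the identity $\varphi(C_m\cup C_n)=\varphi(C_m)+\varphi(C_n)$ that your argument asserts is false in the overlapping odd--odd case; your final conclusion survives only because $1\in\{0,1,2\}$, not because the reasoning is sound, and the same reasoning would ``prove'' the wrong exact value $2$ for $K_4$ minus an edge. The paper instead argues by four cases (even--even, odd--even, odd--odd with common edges, odd--odd edge-disjoint) and exhibits explicit labelings in each. To repair your proof you must at least separate the case of two odd cycles with common edges and construct a labeling whose mono-indexed edge lies on the shared path, rather than feeding the bipartite intersection into Theorem~\ref{T-SNUG}; as stated, that formula cannot be combined with $\varphi(C_m\cap C_n)=0$ in this case without contradicting the example above.
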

\begin{proof}
{\bf Case-1:} Let $C_m$ and $C_n$ be even cycles. By Theorem, \ref{T-WBP}, the sparing number of both $C_m$ and $C_n$ is $0$. Hence, by Theorem \ref{T-SNUG}, $\varphi(C_m\cup C_n) = 0$.

\noindent {\bf Case-2:} Without loss of generality, let $C_m$ be an odd cycle and $C_n$ be an even cycle. Then, by Theorem \ref{T-NME}, the sparing number $C_m$ is 1 and sparing number of $C_n$ is zero. Label the end vertices of an edge, say $e$, of $C_m$ that is not common to $C_n$ by distinct singleton sets so that $e$ is mono-indexed and label the succeeding vertices by distinct non-singleton sets and distinct singleton sets alternately. Also, label the vertices of $C_n$ alternately by distinct singleton sets and non-singleton sets. Clearly, this labeling is a weak IASI for $C_m\cup C_n$. Hence, $\varphi(C_m)=1$ and $\varphi(C_n)=0$ and $\varphi(C_m\cap C_n)=0$. Then, by Theorem \ref{T-SNUG}, $\varphi(C_m\cup C_n)=1$.

\noindent {\bf Case-3:} Let $C_m$ and $C_n$ be odd cycles with some edges in common. Then, by Theorem \ref{T-NME}, $C_m$ and $C_n$ have sparing number $1$. Label the end vertices of an edge, say $e$, that is not common to both $C_m$ and $C_n$ by distinct singleton sets so that $e$ is mono-indexed and label the succeeding vertices by distinct non-singleton sets and distinct singleton sets alternately. This labeling is a weak IASI for $C_m\cup C_n$. Hence, $\varphi(C_m)=1$ and $\varphi(C_n)=1$ and $\varphi(C_m\cap C_n)=1$. Then, by Theorem \ref{T-SNUG}, $\varphi(C_m\cup C_n)=1$.

\noindent {\bf Case-4:} Let $C_m$ and $C_n$ be edge disjoint odd cycles. Then, by Theorem \ref{T-NME}, $C_m$ and $C_n$ have sparing number $1$. Hence, $\varphi(C_m)=1$ and $\varphi(C_n)=1$ and $\varphi(C_m\cap C_n)=0$. Then, by Theorem \ref{T-SNUG}, $\varphi(C_m\cup C_n)=2$.
\end{proof}

In the following discussions in this section, the graphs $G$ that are finite union of cycles are only considered. 

Let the graph has edge disjoint odd cycles. Let $G=C_{m_1}\cup C_{m_2}\cup C_{m_3}\cup \ldots \cup C_{m_n}$. Without loss of generality, let $C_{m_1},C_{m_2},\ldots, C_{m_r}$ be the odd cycles and $C_{m_{r+1}}, C_{m_{r+2}},\ldots,C_{m_n}$ be the even cycles in $G$. We proceed to find the sparing number in several cases as follows.

Suppose that all the cycles in $G$ are edge disjoint. We know that an odd cycle has the sparing number $1$ and an even cycle has the sparing number $0$. In view of these facts, recall the following theorem on Eulerian graphs.

\begin{theorem}\cite{CZ}\label{T-EG}
A graph $G$ is an Eulerian graph if and only if it can be decomposed into a finite number of edge disjoint cycles.
\end{theorem}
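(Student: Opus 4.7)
The plan is to establish this classical Veblen-type theorem by proving the two implications separately, with the ``if'' direction a short degree count and the ``only if'' direction an induction on $|E(G)|$.

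For the ``if'' direction, suppose $G$ decomposes into edge-disjoint cycles $C_1,C_2,\ldots,C_k$. For any vertex $v\in V(G)$, the edge-disjointness of the cycles gives
\[
\deg_G(v) \;=\; \sum_{i=1}^{k}\deg_{C_i}(v),
\]
and each cycle through $v$ contributes exactly $2$ to this sum. Hence every vertex of $G$ has even degree, which is the Eulerian condition.

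For the ``only if'' direction, I would induct on $|E(G)|$. The base case (no edges, or a single cycle) is immediate. For the inductive step, the key auxiliary fact is that any graph in which every vertex has degree at least $2$ contains a cycle: starting from any vertex and repeatedly traversing previously unused edges, the hypothesis that every vertex reached has at least one unused exit forces the walk to continue until some vertex is revisited, and the closed sub-walk so produced contains a cycle $C$. Delete $E(C)$ from $G$ to obtain $G'$; since $C$ contributes exactly $2$ to the degree in $G$ of each of its own vertices, every vertex in $G'$ still has even degree. Applying the induction hypothesis to each non-trivial component of $G'$ yields edge-disjoint cycle decompositions of those components, and adjoining $C$ produces the decomposition of $G$.

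The main obstacle is bookkeeping rather than a genuine difficulty: after removing $E(C)$ the graph $G'$ can become disconnected and may acquire isolated vertices, so the induction must be applied component by component. Once one fixes the convention that an Eulerian graph here simply means every vertex has even degree (with isolated vertices permitted and contributing the empty family of cycles), the induction goes through without complication.
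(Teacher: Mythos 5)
The paper gives no proof of this statement at all: it is quoted from \cite{CZ} as a known classical result (Veblen's decomposition theorem), so there is no internal argument to compare yours against. Your proof is the standard textbook one and is essentially correct: the ``if'' direction by the degree count $\deg_G(v)=\sum_i\deg_{C_i}(v)$ with each cycle contributing $0$ or $2$, and the ``only if'' direction by induction on $|E(G)|$, using the lemma that minimum degree at least $2$ forces a cycle, deleting its edges (which preserves parity of all degrees), and recursing on the components. The one point to be explicit about is the definitional caveat you yourself flag: if ``Eulerian'' is taken in the usual sense of \cite{CZ} (a connected graph possessing a closed trail through every edge), then a disjoint union of two triangles decomposes into cycles without being Eulerian, so the ``if'' direction needs either an implicit connectivity hypothesis together with Euler's theorem (connected plus all degrees even implies an Eulerian circuit), or your convention that Eulerian simply means all degrees even. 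Under either reading, the direction the paper actually uses later (Eulerian implies a decomposition into edge-disjoint cycles, as invoked in Theorem \ref{T-WEG}) is exactly the half you prove by induction, and that part of your argument is complete.
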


The following theorem is a consequence of Theorem \ref{T-EG}.

\begin{theorem}\label{T-WEG}
An Eulerian graph $G$ has a weak IASI if and only if at least $r$ mono-indexed edges of $G$, where $r$ is the number of odd cycles in $G$.
\end{theorem}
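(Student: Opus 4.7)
My approach is to reduce the claim to the sparing numbers of individual cycles by combining the union formula of Theorem \ref{T-SNUG} with the subgraph monotonicity of Theorem \ref{T-WSG} and the known sparing numbers of even and odd cycles (Theorems \ref{T-WBP} and \ref{T-WUOC}). Throughout, I fix the edge-disjoint decomposition $G = C_{m_1} \cup C_{m_2} \cup \cdots \cup C_{m_n}$ produced by Theorem \ref{T-EG}, ordering so that $C_{m_1}, \ldots, C_{m_r}$ are odd and $C_{m_{r+1}}, \ldots, C_{m_n}$ are even.

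For the necessity direction, I would start from the hypothesis that $G$ admits a weak IASI. Each cycle $C_{m_i}$ is a subgraph of $G$, so by Theorem \ref{T-WSG} it inherits a weak IASI. Applying Theorem \ref{T-NME} (or \ref{T-WUOC}) to each of the $r$ odd cycles, each $C_{m_i}$ with $1 \le i \le r$ must contain at least one mono-indexed edge. Because the cycles are edge-disjoint, the mono-indexed edges witnessed in distinct odd cycles are themselves distinct, and $G$ must carry at least $r$ mono-indexed edges in total.

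For the sufficiency direction, my plan is to construct a weak IASI of $G$ with exactly $r$ mono-indexed edges by iterating Theorem \ref{T-SNUG}. Setting $H_1 = C_{m_1}$ and $H_{i+1} = H_i \cup C_{m_{i+1}}$, edge-disjointness guarantees that $H_i \cap C_{m_{i+1}}$ consists only of shared vertices with no edges, so $\varphi(H_i \cap C_{m_{i+1}}) = 0$. Induction on $i$ via Theorem \ref{T-SNUG} then yields $\varphi(H_{i+1}) = \varphi(H_i) + \varphi(C_{m_{i+1}})$, and telescoping gives $\varphi(G) = \sum_{j=1}^{n} \varphi(C_{m_j}) = r \cdot 1 + (n - r) \cdot 0 = r$, producing the required weak IASI with $r$ mono-indexed edges.

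The main obstacle I anticipate lies in the sufficiency direction: Theorem \ref{T-SNUG} presupposes that each piece $H_i$ is itself a weak IASI graph, and this is not automatic once cycles share vertices. I would need to verify at every inductive step that the weak IASIs of $H_i$ and $C_{m_{i+1}}$ can be chosen so as to agree on the common vertices, typically by placing the mandatory mono-indexed edge of a newly adjoined odd cycle away from the shared vertex set and alternating singleton and non-singleton labels on the remaining vertices, exactly as in Cases 3 and 4 of Theorem \ref{T-SNUC}. Orchestrating this compatibility globally across all $n$ cycles is the nontrivial bookkeeping step.
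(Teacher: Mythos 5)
Your necessity argument is essentially the paper's: restrict the weak IASI to each cycle of the decomposition given by Theorem \ref{T-EG} (Theorem \ref{T-WSG}), apply Theorem \ref{T-WUOC} to each odd cycle, and use edge-disjointness to count $r$ distinct mono-indexed edges. For sufficiency the paper does not iterate Theorem \ref{T-SNUG}; it simply takes weak IASIs $f_i$ of the odd cycles (each with one mono-indexed edge) and $g_j$ of the even cycles and defines a labeling of $V(G)$ piecewise, asserting ``clearly $f$ is a weak IASI of $G$.'' So your route through the union formula differs in mechanism, but both arguments stand or fall on exactly the point you flag and then defer: at a vertex lying on several cycles the singleton/non-singleton choices made cycle-by-cycle must agree, and after gluing every edge of $G$ must still have a singleton end.

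That deferred step is a genuine gap, not bookkeeping, and it cannot be discharged in general. A weak IASI forces at least one end vertex of every edge to be mono-indexed, so the non-singleton vertices must form an independent set of the whole graph $G$; labelings chosen cycle by cycle need not be compatible with any such global choice, and Theorem \ref{T-SNUG} gives no control here because $H_i\cap C_{m_{i+1}}$ is edgeless and says nothing about how labels at the shared vertices interact. Concretely, $K_5$ is Eulerian and decomposes into two edge-disjoint $5$-cycles, so $r=2$, yet by Theorem \ref{T-WKN} any weak IASI of $K_5$ requires $\frac{1}{2}(4)(3)=6$ mono-indexed edges; your inductive step $\varphi(H_i\cup C_{m_{i+1}})=\varphi(H_i)+\varphi(C_{m_{i+1}})$ breaks precisely because optimal labelings of the two pieces cannot be made to agree on the many shared vertices. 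So your plan reproduces the paper's sufficiency claim, but the compatibility you correctly identify as ``the nontrivial step'' is left unproved both by you and by the paper, and without additional hypotheses on how the cycles intersect it is in fact false.
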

\begin{proof}
Let $G$ be an Eulerian graph. Since $G$ is Eulerian, by Theorem \ref{T-EG}, $G$ can be decomposed into edge disjoint cycles. Let $C_{m_1},C_{m_2},C_{m_3},\ldots, C_{m_n}$ be the edge disjoint cycles in $G$. Without loss of generality, let $C_{m_1},C_{m_2},C_{m_3},\\ \ldots, C_{m_r}$ be the odd cycles and $C_{m_{r+1}}, C_{m_{r+2}},\ldots, C_{m_{n}}$ be the even cycles in $G$.

Now, assume that $G$ admits a weak IASI, each cycle in $G$ must have a weak IASI, say $f_i$, where $f_i$ is the restriction of $f$ to the cycle $C_{m_i}$.  Even cycles, which admit weak IASI, do not necessarily have a  mono-indexed edge. By Theorem \ref{T-WUOC}, each odd cycles $C_i$, which admits a weak IASI, say $f_i$, must have at least one edge of set-indexing number $1$. Therefore, $G$ has at least $r$ mono-indexed edges.

Conversely, assume that $G$ has $r$ mono-indexed edges. Label the vertices of $G$ in such a way that one edge each in every odd cycle $C_i,1\le i\le r$ is mono-indexed. Then, by Theorem \ref{T-WUOC}, each odd cycle $C_{m_i}, 1\le i\le r$ has a weak IASI, say $f_i$. Also, we note that every even cycle $C_{m_j}, r+1\le j\le m$ can have a weak IASI, say $g_j$. Define $f$ on $V(G)$ by
\[ f(v) = \left\{
  \begin{array}{l l}
    f_i(v) & \quad \text{if $v$ is in the odd cycle $C_{m_i}$}\\
    g_j(v) & \quad \text{if $v$ is in the even cycle $C_{m_j}$}
  \end{array} \right.\]
Clearly, $f$ is a weak IASI of $G$.
\end{proof}

From the above theorem, we observe that the sparing number of an Eulerian graph $G$ is $r$, where $r$ is the number of odd cycles in $G$. Figure \ref{EG-3OC} depicts a weak IASI for an Eulerian graph having three edge disjoint odd cycles.

\begin{figure}[h!]
\centering
\includegraphics[width=0.85\textwidth]{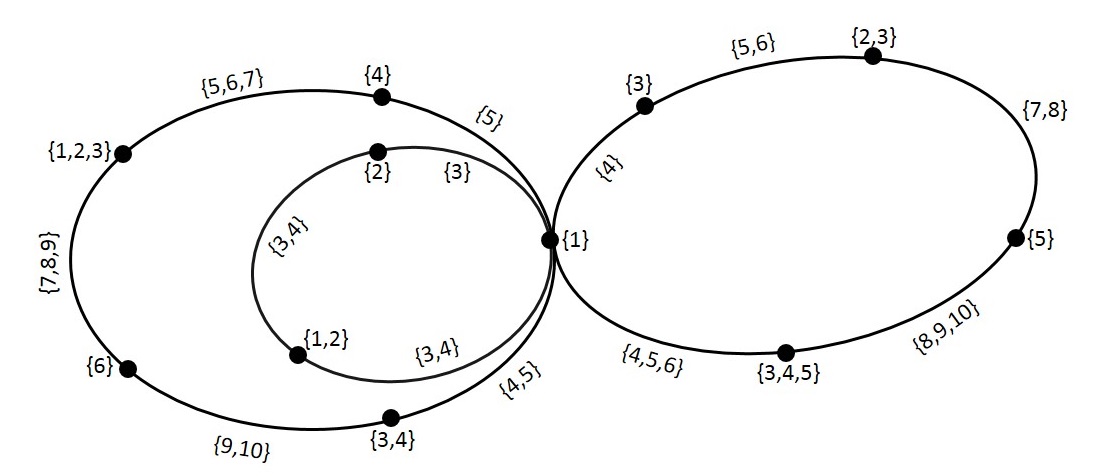}
\caption{}\label{EG-3OC}
\end{figure}

In the following section, we focus on the graphs which have cycles having some edges in common.

%%%%%%%%%%%%%%%%%%%%%%%%%%%%%%%%%%%%%%%%%%%%%%%%%%%%%%%%%%%%%%%%%%%

\section {Sparing Number of Conjoined Graphs}

%%%%%%%%%%%%%%%%%%%%%%%%%%%%%%%%%%%%%%%%%%%%%%%%%%%%%%%%%%%%%%%%%%%

\begin{definition}{\rm
If a set of cycles in a graph $G$ has the same common edge (or path), then that set of cycles is called {\em conjoined structure} of cycles in $G$. If $G$ itself is a conjoined structure of cycles, then it is called a {\em conjoined graph}.}
\end{definition}

If all cycles in a conjoined graph $G$ are odd cycles, then $G$ is called an {\em odd-conjoined graph.} and if all cycles in a conjoined graph $G$ is are even cycles, then $G$ is called an {\em even-conjoined graph}.

Figure \ref{G-CG-E} depicts an even conjoined graph and Figure \ref{G-CG-O} depicts an odd conjoined graph.

\begin{figure}[h!]
\begin{center}
\begin{subfigure}[b]{0.4\textwidth}
\includegraphics[scale=0.35]{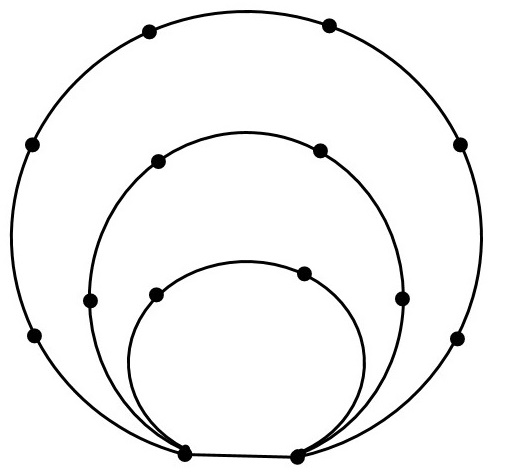}
\caption{\small \sl Even-Conjoined Graph.}\label{G-CG-E}
\end{subfigure}
\quad
\begin{subfigure}[b]{0.4\textwidth}
\includegraphics[scale=0.35]{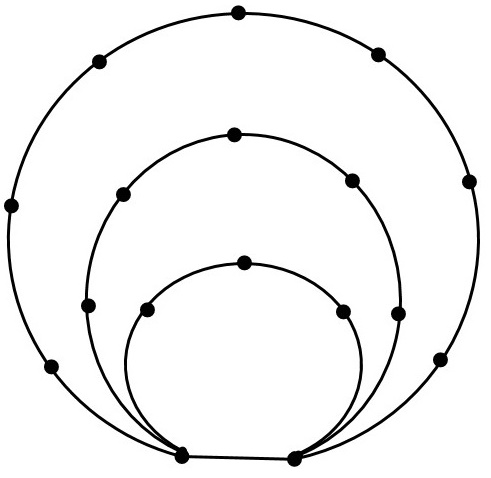}
\caption{\small \sl Odd-Conjoined Graph.}\label{G-CG-O}
\end{subfigure}
\caption{}
\end{center}  
\end{figure}

\begin{theorem}
The sparing number of an odd-conjoined graph is $1$ and that of an even-conjoined graph is $0$.
\end{theorem}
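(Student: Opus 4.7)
My plan is to split on the two cases given in the statement and exploit a parity argument on cycle lengths. Denote by $P$ the common path, of length $k\ge 1$, with endpoints $u$ and $v$, and let $P_1,\ldots,P_r$ be the remaining internally disjoint $(u,v)$-paths, with $|P_i|=\ell_i$, so that the constituent cycles of $G$ are $C_i=P\cup P_i$ with $|C_i|=k+\ell_i$. Every cycle of $G$ is either one of the $C_i$'s or of the form $P_i\cup P_j$ (two of the outer arcs glued along $\{u,v\}$), of length $\ell_i+\ell_j$.

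For the even-conjoined case, each $|C_i|=k+\ell_i$ is even, so $\ell_i\equiv k\pmod 2$ for all $i$; hence $\ell_i+\ell_j\equiv 2k\equiv 0\pmod 2$, and every cycle of $G$ is even. Then $G$ is bipartite, and Theorem \ref{T-WBP} gives $\varphi(G)=0$.

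For the odd-conjoined case, every $C_i$ is odd, so $G$ is not bipartite and Theorem \ref{T-SB1} yields $\varphi(G)\ge 1$. For the matching upper bound, the idea is to place a single mono-indexed edge on the common path $P$; since $P$ is contained in every $C_i$, one such edge simultaneously supplies the obligatory mono-indexed edge required by each odd cycle (Theorem \ref{T-WUOC}). Concretely, write $P=v_0 v_1\ldots v_k$ and label $v_0,v_1$ by two distinct singleton sets (making $v_0v_1$ the sole mono-indexed edge on $P$); for $j\ge 2$, alternate, assigning $v_j$ a singleton set when $j$ is odd and a (distinct) non-singleton set when $j$ is even. Along each $P_i=v_0 w_{i,1}\ldots w_{i,\ell_i-1}v_k$, label the internal vertices alternately starting with $w_{i,1}$ non-singleton, so that $w_{i,j}$ is non-singleton precisely when $j$ is odd.

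It only remains to verify that the final edge $w_{i,\ell_i-1}v_k$ of each $P_i$ is not mono-indexed. Because $|C_i|=k+\ell_i$ is odd, $k$ and $\ell_i$ have opposite parities; a direct check of the two sub-cases ($k$ odd with $\ell_i$ even, and $k$ even with $\ell_i$ odd) shows that $w_{i,\ell_i-1}$ and $v_k$ are never both singleton, so no extra mono-indexed edge arises. Choosing all non-singleton sets (respectively singleton sets) pairwise distinct secures global injectivity of $f$ and of $g_f$. Thus the construction is a weak IASI with exactly one mono-indexed edge, giving $\varphi(G)\le 1$ and, together with the lower bound, $\varphi(G)=1$. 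The main technical care needed is the parity bookkeeping at the far endpoint $v_k$ for the small boundary values of $k$ and $\ell_i$; beyond that the construction is routine.
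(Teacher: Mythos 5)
Your proposal is correct and follows essentially the same route as the paper: the lower bound comes from the presence of odd cycles (non-bipartiteness), and the upper bound from mono-indexing a single edge of the common path, which serves every odd cycle at once, while the even case reduces to evenness/bipartiteness and Theorem \ref{T-WBP}. You simply carry out the alternating labeling and the parity bookkeeping at the far endpoint more explicitly than the paper, which states the construction in one line.
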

\begin{proof}
Let $G=\displaystyle{\bigcup_{i=1}^n}C_{m_i}$ be an odd conjoined graph. Then, each $C_{m_i}$ is an odd cycle and hence by Theorem \ref{T-NME}, $C_{m_i}$ must have at least one mono-indexed edge. Label the vertices of $G$ in such a way that a common edge of all cycles is mono-indexed. Clearly, this labeling is a weak IASI of $G$. Therefore, the sparing number of $G$ is $1$.

If $G$ is an even-conjoined graph, then each $C_{m_i}$ is even. Hence, by Theorem \ref{T-WBP}, each $C_{m_i}$ has the sparing number $0$. Therefore, sparing number of $G$ is $0$.
\end{proof}

\begin{remark}{\rm
We note that an odd-conjoined graph is analogous to an odd cycle, since the sparing numbers of both graphs are $1$ and both graphs contain odd number number of mono-indexed edges.}
\end{remark}

\begin{theorem}\label{T-SN-CCP}
Let the graph $G$ contains $r$ odd cycles and $l$ even cycles all of them have a common path. Then, the sparing number of $G$  is given by
\[ \varphi(G) = \left\{
  \begin{array}{l l}
    r & \quad \text{if $r\le l$}\\
    l+1& \quad \text{if $l<r$}
  \end{array} \right.\]
\end{theorem}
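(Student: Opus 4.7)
The plan is to parametrize the graph by the common path $P$ of length $p$, with endpoints $u$ and $v$, so that every cycle can be written as $C_i = P \cup Q_i$ where $Q_i$ is a path from $u$ to $v$ of length $q_i$, and the $Q_i$ are internally disjoint from $P$ and from one another. Note that $C_i$ is odd precisely when $p$ and $q_i$ have opposite parity. Write $m$ for the number of mono-indexed edges in $P$ and $m_i$ for those in $Q_i$, so that the total number of mono-indexed edges in $G$ equals $m + \sum_i m_i$. By Theorem \ref{T-NME}, $m + m_i$ must be odd when $C_i$ is odd and even when $C_i$ is even.

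For the lower bound I would split on the parity of $m$. If $m$ is even, then for each of the $r$ odd cycles $m_i$ must be odd and hence $m_i \ge 1$, giving a total of at least $r$ mono-indexed edges. If $m$ is odd, then $m \ge 1$ and for each of the $l$ even cycles $m_i$ must be odd and hence $m_i \ge 1$, giving a total of at least $1 + l$. Therefore $\varphi(G) \ge \min(r, l+1)$, which equals $r$ when $r \le l$ and equals $l + 1$ when $l < r$.

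For the upper bound I would exhibit explicit weak IASIs realising these counts. The basic tool is the observation that a weak IASI on any path is determined by partitioning its vertices into a singleton class $S$ and a non-singleton class $N$, subject only to the constraint that $N$ be independent, with the mono-indexed edges being precisely those having both endpoints in $S$. When $r \le l$, I would label $P$ by strict alternation starting at $u$, so that $P$ contributes no mono-indexed edge; the parity of $p$ then determines whether $u$ and $v$ lie in the same class or in opposite classes. On each $Q_i$, I would either alternate all the way through (when the parity of $q_i$ is compatible with the labels already fixed at $u$ and $v$) or insert one consecutive singleton pair to correct the parity (when it is not). A short case check shows that the latter happens exactly for the $r$ odd cycles, yielding a total of $r$ mono-indexed edges. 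When $l < r$, I would instead label $P$ with a single consecutive singleton pair, contributing one mono-indexed edge; this interchanges the roles of odd and even cycles on the $Q_i$'s, so the $l$ even cycles are now the ones needing a corrective mono-indexed edge while the odd cycles can be alternated cleanly, giving a total of $1 + l$.

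The main obstacle will be the bookkeeping in the upper-bound construction, namely confirming that once a labeling of $P$ is fixed, the forced labels at $u$ and $v$ are simultaneously compatible with labelings of every $Q_i$ having the prescribed number of mono-indexed edges. Because the $Q_i$ are internally disjoint, this reduces to an independent local parity check on each $Q_i$ involving only the parities of $p$ and $q_i$, which matches exactly with whether $C_i$ is odd or even; this makes both constructions go through and meets the lower bound.
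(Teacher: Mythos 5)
Your proposal is correct, and it is in fact tighter than the paper's own argument. The two upper-bound constructions you describe (strictly alternate along the common path $P$ and pay one corrective mono-indexed edge inside each odd cycle's private path $Q_i$ when $r\le l$; put a single mono-indexed edge on $P$ and pay one corrective edge inside each even cycle's $Q_i$ when $l<r$) are exactly the two labelings the paper uses in its Case-1 and Case-2. What you add, and what the paper essentially omits, is a genuine lower bound: the paper only exhibits the labelings and asserts minimality, whereas your dichotomy on the parity of $m$ (the number of mono-indexed edges on $P$), combined with Theorem \ref{T-NME} applied to each cycle $C_i=P\cup Q_i$, yields $\varphi(G)\ge\min(r,l+1)$ for \emph{every} admissible labeling, which is what actually forces the stated formula. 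One small point to nail down in your ``bookkeeping'': if some $Q_i$ is a single chord $uv$ (length $1$), your correction step cannot be carried out when the alternation on $P$ happens to place non-singleton labels on both $u$ and $v$ (an edge with two non-singleton ends is forbidden, not merely costly); this is avoided simply by starting the alternation of $P$ with a singleton label at $u$, and since the graph is simple at most one such chord can occur, so the construction and the claimed counts go through.
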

\begin{proof}
Let $G$ be the given graph with all cycles in it have a common path. Assume that $G$ admits a weak IASI $f$ . Then, every cycle in $G$ also admits a weak IASI. All odd cycles in $G$ must have at least one mono-indexed edge.

\noindent {\bf Case-1:} Let $r\le l$. Then, with respect to the function $f$, label the vertices of $G$ by distinct singleton sets and distinct non-singleton sets in such a way that one edge in each odd cycle, not common to any other cycle, is mono-indexed and no edge of even cycles is mono-indexed. Then, the sparing number of $G$ is $r$. 

\noindent {\bf Case-2:} Let $l<r$. Then, with respect to the function $f$, label the vertices of $G$ by distinct singleton sets and distinct non-singleton sets in such a way that one edge in the path common to all cycles is mono-indexed. Now, all the odd cycles have a mono-indexed edge. But, since even cycles have even number of mono-indexed edges, every even cycle contains another mono-indexed edge under $f$. Therefore, $G$ has at least $l+1$ mono-indexed edges. Hence, the sparing number of $G$ is $l+1$.
\end{proof}

%%%%%%%%%%%%%%%%%%%%%%%%%%%%%%%%%%%%%%%%%%%%%%%%%%%%%%%%%%%%%%%%

%%%%%%%%%%%%%%%%%%%%%%%%%%%%%%%%%%%%%%%%%%%%%%%%%%%%%%%%%%%%%%%%%%%

\section {Sparing Number of Entwined Graphs}

%%%%%%%%%%%%%%%%%%%%%%%%%%%%%%%%%%%%%%%%%%%%%%%%%%%%%%%%%%%%%%%%%%%

\begin{definition}{\rm
Let every path of $G$ appears in at most two cycles and every cycle is attached to at most two cycles. Then exactly two cycles have a common path. Such a graph $G$ is called {\em entwined graph of cycles} in $G$. If $G$ itself is a entwined graph of cycles, then it is called an {\em entwined graph}.

A entwined graph that contains only odd cycles is called an {\em odd-entwined graph} and a entwined graph that contains only even cycles is called an {\em even-entwined graph}.}
\end{definition}

Figure \ref{G-SNCH-EC} depicts an odd-entwined graph with even number of cycles and Figure \ref{G-SNCH-OC} depicts an odd-entwined graph with odd number of cycles.

\begin{figure}[h!]
\begin{center}
\begin{subfigure}[b]{0.45\textwidth}
\includegraphics[scale=0.35]{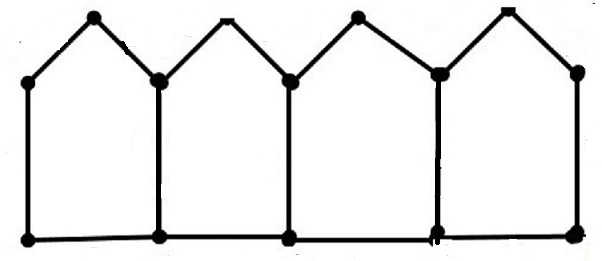}
\caption{\small \sl An odd-entwined graph with even number of cycles .}\label{G-SNCH-EC}
\end{subfigure}
\quad
\begin{subfigure}[b]{0.45\textwidth}
\includegraphics[scale=0.35]{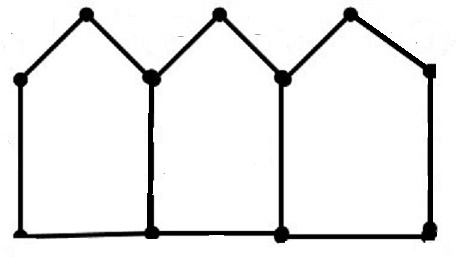}
\caption{\small \sl An odd-entwined graph with odd number of cycles.}\label{G-SNCH-OC}
\end{subfigure}
\caption{}
\end{center}  
\end{figure}

\begin{theorem}\label{T-SNOCG}
Let $G$ be an odd-entwined graph. Then, $G$ admits a weak IASI if and only if it has at least $\lfloor \frac{(n+1)}{2}\rfloor$ mono-indexed edges.
\end{theorem}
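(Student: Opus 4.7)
The plan is to prove the two implications separately, using a double-counting argument for the lower bound and an explicit pairing construction for the upper bound; Theorem \ref{T-NME} is the central ingredient translating the weak IASI condition on each odd cycle into a statement about mono-indexed edges.

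For necessity, suppose $G$ admits a weak IASI. Then by Theorem \ref{T-NME}, each of the $n$ odd cycles of $G$ contains an odd number, and hence at least one, of mono-indexed edges. Summed over all cycles, the total count of (cycle, mono-indexed edge in it) incidences is therefore at least $n$; but since by definition every edge of an entwined graph lies in at most two cycles, this sum is at most $2m$, where $m$ denotes the total number of mono-indexed edges in $G$. Combining these bounds gives $m \geq \lceil n/2 \rceil = \lfloor (n+1)/2 \rfloor$.

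For sufficiency I would index the cycles $C_1, C_2, \ldots, C_n$ along the entwined structure, so that consecutive cycles $C_i$ and $C_{i+1}$ share a common path $P_i$, and then pair them up as $(C_1,C_2), (C_3,C_4), \ldots$. On the shared path of each pair I would designate a single edge to be mono-indexed; if $n$ is odd, the unpaired cycle $C_n$ receives one additional mono-indexed edge on one of its edges not lying in $P_{n-1}$. This produces exactly $\lfloor (n+1)/2 \rfloor$ mono-indexed edges in total, with each odd cycle inheriting exactly one, matching the parity required by Theorem \ref{T-NME} (and in particular by Theorem \ref{T-WUOC} on a cycle-by-cycle basis). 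The remaining labeling is then produced by assigning distinct singleton sets to the endpoints of every designated mono-indexed edge and alternating distinct non-singleton and singleton labels around each cycle outward from those endpoints.

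The main obstacle will be verifying that the alternation rules forced within two cycles meeting at a shared path are globally consistent and do not create any extra mono-indexed edges. Because two paired cycles intersect precisely along their common path $P_i$, anchoring the alternation at the designated mono-indexed edge on $P_i$ fixes the singleton/non-singleton labels along $P_i$ simultaneously for both cycles, and the odd length of each cycle guarantees that the alternation closes up around the cycle without producing spurious singleton-singleton edges. Finally, injectivity of the induced edge labeling can be secured by drawing the singleton and non-singleton labels from a sufficiently rich family (for instance, translates of a fixed arithmetic progression on the non-singleton side) so that all edge sumsets are distinct, confirming that the constructed labeling is indeed a weak IASI of $G$ realising the lower bound.
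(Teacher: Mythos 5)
Your sufficiency construction is essentially the paper's own: the paper likewise pairs consecutive cycles $C_{m_{2j-1}},C_{m_{2j}}$, mono-indexes one edge of each pair's common path, and lets an unpaired last cycle (when $n$ is odd) carry one extra mono-indexed edge, giving $\lfloor\frac{n+1}{2}\rfloor$ in total. Where you genuinely differ is the necessity direction: the paper exhibits the labeling and then simply asserts that $\lfloor\frac{n+1}{2}\rfloor$ is the sparing number, whereas you prove the lower bound by double counting incidences --- each of the $n$ odd cycles must contain a mono-indexed edge by Theorem \ref{T-NME}, and in an entwined graph every edge lies in at most two cycles, so $n\le 2m$ and $m\ge\lceil n/2\rceil=\lfloor\frac{n+1}{2}\rfloor$. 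This is a real gain: it is the only step that actually uses the defining property of entwined graphs, and it supplies the half of the equivalence the paper leaves implicit.

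One caveat on the sufficiency side, which you rightly flag as the main obstacle but do not fully discharge (and neither does the paper): once the alternation of a pair $(C_{2j-1},C_{2j})$ is anchored at the designated edge of their common path, the labels it propagates onto the path shared with the next pair are no longer free, and if consecutive shared paths are very short or meet at a vertex (e.g.\ a chain of triangles), the forced labels can place a non-singleton vertex on every edge of the next designated path, so the intended mono-indexed edge cannot be realized there and the count $\lfloor\frac{n+1}{2}\rfloor$ is not attained by this scheme. A complete argument should either restrict to entwined graphs in which the successive common paths are sufficiently separated (as in the paper's figures) or carry out the parity bookkeeping explicitly, choosing which edge of each shared path is the ``defect'' so that it falls on two singleton-labelled vertices; your sketch (anchor at the designated edge, use the odd cycle length to close the alternation, then pick labels rich enough to keep the induced edge map injective) is the right idea for the generic case and is already more careful than the paper's two-line justification.
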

\begin{proof}
For $1<j\le \lfloor \frac{(n+1)}{2}\rfloor$, label the vertices of the cycles $C_{m_{2j-1}}$ and $C_{m_{2j}}$ with distinct singleton sets and distinct non-singleton sets alternately, so that a common edge of $C_{m_{2j-1}}$ and $C_{m_{2j}}$ is mono-indexed. Here, if $G$ contains even number of cycles, then $G$ must have $\frac{n}{2}$ mono-indexed edges and if $G$ contains odd number of cycles, then $G$ must have $\frac{n+1}{2}$ mono-indexed edges. Hence, the sparing number of an odd-entwined graph on $n$ cycles is $\lfloor \frac{(n+1)}{2}\rfloor$.
\end{proof}

We observe that if $G$ is an even-entwined graph, then its sparing number is $0$. The sparing number of entwined graphs, in which cycles of different parities share common edges, is discussed in Theorem \ref{T-SNEG1}.

\begin{theorem}\label{T-SNEG1}
Let $G$ be a entwined graph such that all odd cycles in $G$ are edge disjoint. Then, $G$ admits a weak IASI if and only if it has at least $r$ mono-indexed edges, where $r$ is the number of odd cycles in $G$.
\end{theorem}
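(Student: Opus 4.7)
The plan is to mirror the argument of Theorem \ref{T-WEG}, splitting into necessity and an explicit sufficiency construction, while accounting for the new feature that an odd cycle of $G$ may share paths with its even neighbours.

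For necessity I would invoke Theorem \ref{T-WSG} and Theorem \ref{T-WUOC}: the restriction of any weak IASI of $G$ to each of the $r$ odd cycles $C_{m_1},\ldots,C_{m_r}$ is again a weak IASI, so each odd cycle must carry at least one mono-indexed edge. Since the odd cycles are pairwise edge-disjoint by hypothesis, the resulting $r$ mono-indexed edges are distinct in $G$, yielding $\varphi(G)\ge r$.

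For sufficiency I would exhibit a weak IASI of $G$ with exactly $r$ mono-indexed edges. The edge-disjointness of the odd cycles means that every shared path containing an edge of an odd cycle $C_{m_i}$ is also a path of some even cycle of $G$. My strategy is, for each odd cycle $C_{m_i}$, to pick an edge $e_i$ not lying on any shared path of $C_{m_i}$, assign distinct singleton sets to its two endpoints (so that $e_i$ is mono-indexed), and then label the remaining vertices of $C_{m_i}$ alternately with distinct non-singleton and distinct singleton sets; since $|C_{m_i}|$ is odd, this alternation closes up in such a way that $e_i$ is the only mono-indexed edge of $C_{m_i}$, consistent with Theorems \ref{T-WUOC} and \ref{T-NME}. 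For every even cycle $C_{m_j}$ I would alternate singleton and non-singleton sets around $C_{m_j}$ so that no edge of $C_{m_j}$ becomes mono-indexed.

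The hard part will be verifying that these per-cycle labelings agree on every shared path, and hence fit together into a global weak IASI of $G$. The argument I have in mind is a parity check: if $P$ is a shared path of length $p$ between an odd cycle $C_{m_i}$ and an even cycle $C_{m_j}$, then the two arcs into which $P$ partitions $C_{m_j}$ have the same parity (because $|C_{m_j}|$ is even), so the alternating singleton/non-singleton pattern induced on $P$ by the labeling of $C_{m_i}$ extends uniquely and consistently to an alternating labeling of the rest of $C_{m_j}$, producing no additional mono-indexed edge. Processing the cycles of $G$ sequentially along the entwinement, so that each new cycle attaches along a single already-labeled shared path and its alternation is started to match that path, then yields a weak IASI of $G$ with exactly $r$ mono-indexed edges. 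Combined with the necessity bound this gives $\varphi(G)=r$ and the claimed equivalence.
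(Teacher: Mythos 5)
Your proposal follows essentially the same route as the paper's proof: necessity by restricting the weak IASI to the $r$ pairwise edge-disjoint odd cycles, each of which needs a mono-indexed edge by Theorem \ref{T-WUOC}, and sufficiency by labeling the cycles alternately with distinct singleton and non-singleton sets so that only one edge per odd cycle is mono-indexed. Your parity argument for gluing the per-cycle labelings along the shared paths merely fills in the consistency detail that the paper's proof dismisses with ``clearly'', so the proposal is correct and matches the paper's approach.
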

\begin{proof}
Let $G$ be a entwined graph in which the odd cycles $C_{m_1},C_{m_2},\ldots,C_{m_r}$ are edge disjoint. If two cycles in $G$ have a common edge, then they are of different parity or both of them are even cycles. Assume that $G$ is a weak IASI graph. Then, each $C_{m_i}$ is a weak IASI graph. By Theorem, each odd cycle $C_{m_i}$ has at least one mono-indexed edge. Therefore, $G$ has at least $r$ mono-indexed edges. 

Conversely, assume that $G$ can have (at least) $r$ mono-indexed edges. Label the vertices of the odd cycles $C_{m_i}, 1\le i\le r$ alternately by distinct singleton sets and distinct non-singleton sets so that at least one edge of each odd cycle $C_{m_i}$ is mono-indexed. Clearly, this labeling is a set-indexer and hence is a weak IASI for $G$.
\end{proof}

\begin{remark}{\rm
The sparing number of a entwined graph $G$ with edge disjoint odd cycles is the number of odd cycles in $G$.}
\end{remark}

%%%%%%%%%%%%%%%%%%%%%%%%%%%%%%%%%%%%%%%%%%%%%%%%%%%%%%%%%%%%%%%%%%%%%%%%%%%%%%%%%%%%%%%%%%%%%%%%%%%%%%%%%%%%%%%%%%%%%%%%%%%%%%%%%%%

\section {Sparing Number of Floral Graphs}

%%%%%%%%%%%%%%%%%%%%%%%%%%%%%%%%%%%%%%%%%%%%%%%%%%%%%%%%%%%%%%%%%%%
All finite union of cycles need not have entwined structures. Some cycles may have common edges with more than two cycles. We call such a structure as {\em floral graphs}.

\begin{definition}{\rm
Let $G$ be a graph in which some or all of whose edges of a cycle $C_m$ are common to some other distinct cycles. Such a graph structure is called a {\em floral structure} of cycles in $G$. If $G$ itself is a floral structure of cycles, then it is called a {\em floral graph}.

Let $G$ be a floral graph in which $C_k$ be the cycle, some of whose edges are common to $C_{m_1},C_{m_2},C_{m_3},\ldots,C_{m_l}$. Then, each $C_{m_i}$ is called a {\em petal} of $G$ and $C_k$ is called the {\em nucleus} of $G$.

If the petals of a floral graph $G$  have no common edges in between them, such a graph is known as a {\em detached floral graph}. If the petals of a floral graph have some common edges in between them, such a graph is known as a {\em attached floral graph}.}
\end{definition}

Figures \ref{G-DFG} illustrates a detached floral graph and  Figures \ref{G-AFG} illustrates an attached floral graph.

\begin{figure}[h!]
\begin{center}
\begin{subfigure}[b]{0.4\textwidth}
\includegraphics[scale=0.35]{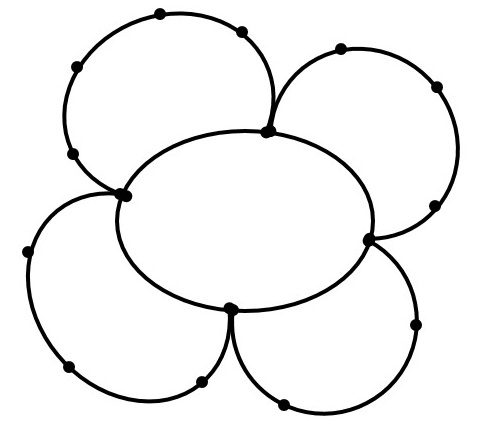}
\caption{\small \sl A detached floral graph.}\label{G-DFG}
\end{subfigure}
\qquad
\begin{subfigure}[b]{0.4\textwidth}
\includegraphics[scale=0.35]{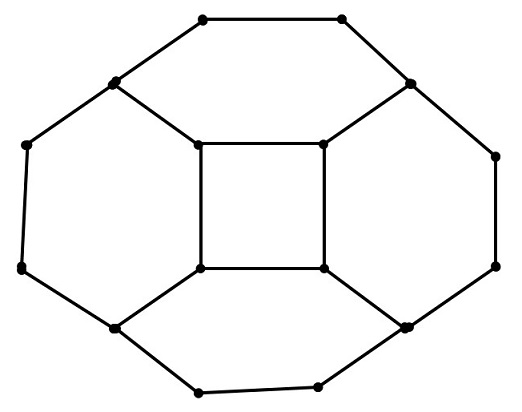}
\caption{\small \sl An attached floral graph.}\label{G-AFG}
\end{subfigure}
\caption{}
\end{center}  
\end{figure}

\begin{definition}{\rm
If all petals in a floral graph $G$ are of same parity, then $G$ is called a {\em like-petal graph} and if petals are of different parities, then $G$ is called a {\em mixed-petal graph}. If the petals in a mixed floral graph $G$ are alternately odd and even, then $G$ is called {\em alternating-petal graph}.

If the nucleus and petals of a floral graph $G$ are of same parity, then $G$ is called a {\em connatural graph}.}
\end{definition}

\noindent Figure \ref{G-CFG-E} illustrates a connatural floral graph of even cycles and Figure \ref{G-CFG-O} illustrates a connatural floral graph of odd cycles.

\begin{figure}[h!]
\begin{center}
\begin{subfigure}[b]{0.40\textwidth}
\includegraphics[scale=0.35]{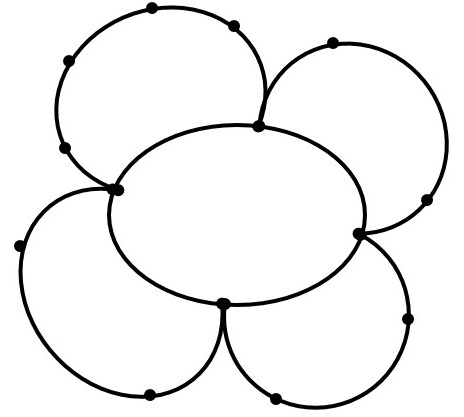}
\caption{\small \sl A connatural graph of even cycles.}\label{G-CFG-E}
\end{subfigure}
\qquad
\begin{subfigure}[b]{0.40\textwidth}
\includegraphics[scale=0.35]{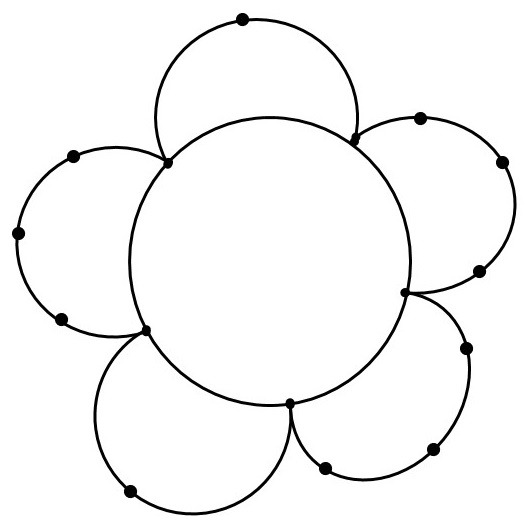}
\caption{\small \sl A connatural graph of odd cycles.}\label{G-CFG-O}
\end{subfigure}
\caption{}
\end{center}  
\end{figure}

\begin{remark}{\rm
All connatural graphs are like-petal graphs, but, the converse need not be true.}
\end{remark} 

Figure \ref{G-CFG} depicts a like-petal graph but not a connatural graph.

\begin{figure}[h!]
\centering
\includegraphics[width=0.35\textwidth]{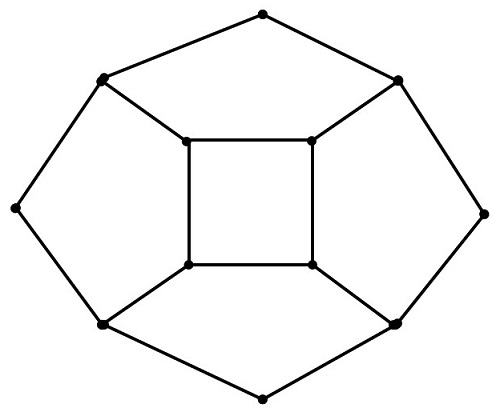}
\caption{\small \sl A like-petal graph that is not  connatural.}\label{G-CFG}
\end{figure}

In this section, we discuss about the sparing number of floral graphs. The floral graph $G$ mentioned in this section is like-petal, unless mentioned otherwise. 

If the nucleus and all petals of $G$ are even cycles, then the sparing number $G=C_k\cup(\displaystyle{\bigcup_{i=1}^l}C_{m_i})$ is $0$. Hence assume that all cycles of $G$ are not even. Let length of $C_k$ be $k$. Then,  $k\ge l$. 

\noindent {\bf Case-1:} Let nucleus and petals be all odd. Then, all $l+1$ cycles must have at least one odd cycle. 

Assume that $G$ is a detached floral graph. If $l$ is odd, then label the vertices of $G$ in such a way that the vertices (and hence the edges) of $C$ that are common to the cycles $C_{m_i},1\le i\le l$ are mono-indexed and the vertices of each $C_{m_i},1\le i\le l$ not common to $C_k$ have distinct non-singleton sets and distinct singleton sets (which are not used for labeling the vertices of any other cycles) as their set-labels. Then, the number of mono-indexed edges in $G$ under this labeling is $l$.

If $l$ is even, then $l<r$.  Label the vertices of $G$ as above such that  the edges of $C$ that are common to the cycles $C_{m_i},1\le i\le l$ are mono-indexed and no other edge $C_{m_i}$ is mono-indexed. Since $l$ is even, at least one more edge of $C_k$ must mono-indexed. Then, the number of mono-indexed edges in $C_k$ under this labeling is $l+1$. 

If $G$ is an attached floral graph, Then, then $\displaystyle{\bigcup_{i=1}^l}C_{m_i}$ is a entwined graph. If $l$ is odd, then for $1\le i\le \frac{(l-1)}{2}$, label the vertices of $G$ in such a way that the edge common to $C_{m_{2i-1}}$ and $C_{m_{2i}}$  and the edge common to $C_k$ and $C_{m_l}$ are mono-indexed. Therefore, the minimum number of mono-indexed vertices in $G$ is  $\frac{(l-1)}{2}+1 = \frac{(l+1)}{2}$. If $l$ is even, then clearly $l\le k$. As mentioned above, for $1\le i\le \frac{l}{2}$, label the vertices of $G$ in such a way that the edge common to $C_{m_{2i-1}}$ and $C_{m_{2i}}$  and the edge common to $C$ and $C_{m_l}$ and one edge of $C_k$, that is not common to any petal of $G$, are mono-indexed. Hence, the minimum number of mono-indexed edges in $G$ is $\frac{l}{2}+1=\frac{(l+2)}{2}$.

\noindent {\bf Case-2:} Let $C_k$ be an even cycle and all $C_{m_i},1\le i\le l$ be odd. Then, $C_k$ need not have any mono-indexed edge and each $C_{m_i}$ must have (at least) one mono-indexed edge. 

If $G$ is an attached floral graph, then $\displaystyle{\bigcup_{i=1}^l}C_{m_i}$ forms a entwined graph. Then by Theorem \ref{T-SNOCG}, $G$ has $\lfloor \frac{l+1}{2}\rfloor$ mono-indexed edges. 

If $G$ is a detached floral graph, then at least one edge of each $C_{m_i},1\le i \le l$ must be mono-indexed. Therefore, $G$ has $l$ mono-indexed edges.

\noindent {\bf Case-3:} Let $C_k$ be an odd cycle and all $C_{m_i},1\le i\le l$ are even. Then, $C_k$ must have (at least) one mono-indexed edge and no $C_{m_i}$ need to have a mono-indexed edge.

If $l<k$, then label the vertices of $G$ in such a way that an edge of $C_k$, that is not common to any $C_i$, is mono-indexed. Hence, $G$ has at least one mono-indexed edges.

If $l=k$, then label the vertices of $G$ in such a way that an edge of $C_k$ is mono-indexed. This edge is common to an even cycle, say $C_{m_j}$. Since an even cycle must have even number of mono-indexed edges, $C_{m_j}$  must have one more mono-indexed edge. Hence, $G$ has at least $2$  mono-indexed edges.

%%%%%%%%%%%%%%%%%%%%%%%%%%%%%%%%%%%%%%%%%%%%%%%%%%%%%%%%%%%%%%%%

%%%%%%%%%%%%%%%%%%%%%%%%%%%%%%%%%%%%%%%%%%%%%%%%%%%%%%%%%%%%%%%%%%%%%%%%%%%%%%%%%%%%%%%%%%%%%%%%%%%%%%%%%%%%%%%%%%%%%%%%%%%%%%%%%%%

\section {Conclusion}

%%%%%%%%%%%%%%%%%%%%%%%%%%%%%%%%%%%%%%%%%%%%%%%%%%%%%%%%%%%%%%%%%%%

The admissibility of weak IASI various other graph operations  and the corresponding sparing numbers are open. Listed below are some of the identified problems in this area.

\begin{problem}
Determine the admissibility of weak IASI by alternating-petal graphs and their sparing numbers.
\end{problem}

\begin{problem}
Determine the admissibility of weak IASI by non-alternating mixed petal graphs and their sparing numbers. 
\end{problem}

\begin{problem}
Determine the admissibility of weak IASI by different graph products and their sparing numbers.
\end{problem}

More properties and characteristics of weak and strong IASIs, both uniform and non-uniform, are yet to be investigated. We have formulated some conditions for some graph classes to admit weak and strong IASIs. The problems of establishing the necessary and sufficient conditions for various graphs and graph classes to have certain IASIs are open.

%%%%%%%%%%%%%%%%%%%%%%%%%%%%%%%%%%%%%%%%%%%%%%%%%%%%%%%%%%%%%%%%

%%%%%%%%%%%%%%%%%%%%%%%%%%%%%%%%%%%%%%%%%%%%%%%%%%%%%%%%%%%%%%%%%%%


\begin{thebibliography}{9}

\bibitem {A1} {\small {\sc B D Acharya}, {\bf Set-Valuations and Their Applications}, MRI Lecture notes in Applied Mathematics, The Mehta Research Institute of Mathematics and Mathematical Physics, New Delhi, 1983.}
\bibitem {TMKA} {\small {\sc T M K Anandavally}, {\em A Characterisation of 2-Uniform IASI Graphs}, Int. Journal of Contemp. Math. Sciences, {\bf 8}(10)(2013), 459-462.}
\bibitem {BM1} {\small {\sc J A Bondy and U S R Murty}, {\bf Graph Theory}, Springer, 2008.}
\bibitem {CZ} {\small {\sc G Chartrand and P Zhang}, {\bf Introduction to Graph Theory}, McGraw-Hill Inc., 2005.}
\bibitem {ND} {\small {\sc N Deo}, {\em Graph Theory with Applications to Engineering and Computer Science}, PHI Learning, 1974.}
\bibitem {JAG1} {\small {\sc J A Gallian}, {\em A Dynamic Survey of Graph Labelling}, The Electronic Journal of Combinatorics (DS 16), 2011.}
\bibitem {GA} {\small {\sc K A Germina and T M K Anandavally}, {\em Integer Additive Set-Indexers of a Graph:Sum Square Graphs}, Journal of Combinatorics, Information and System Sciences, {\bf 37}(2-4)(2012), 345-358.}
\bibitem {GS1} {\small {\sc K A Germina and N K Sudev}, {\em On Weakly Uniform Integer Additive Set-Indexers of Graphs}, Int. Math. Forum, {\bf 8}(37)(2013), 1827-34.}
\bibitem {GS2} {\small {\sc K A Germina and N K Sudev}, {\em Some New Results on Strong Integer Additive Set-Indexers}, communicated.}
\bibitem {FH}  {\small {\sc F Harary}, {\bf Graph Theory}, Addison-Wesley Publishing Company Inc., 1994.}
\bibitem {GS3} {\small {\sc N K Sudev and K A Germina}, {\em A Characterisation of Weak Integer Additive Set-Indexers of Graphs}, ISPACS J. Fuzzy Set-Valued Analysis {\bf 2014}(2014), Article Id: jfsva-0189, 7Pages.}
\bibitem {GS4} {\small {\sc N K Sudev and K A Germina}, . {\em Weak Integer Additive Set-Indexers of Graph Operations}, To appear in Global Journal of Math. Sciences: Theory and Practical, 2014.}
\bibitem {DBW} {\small {\sc D B West}, {\bf Introduction to Graph Theory}, Pearson Education Inc., 2001.}


\end{thebibliography}
\end{document}